\documentclass[11pt]{amsart}
\usepackage{amsmath,paralist,amssymb}
\usepackage{amsthm}
\usepackage{hyperref}
\usepackage{graphicx,subfigure}
\usepackage{tikz,color}
\usepackage[margin=1in]{geometry}
\usetikzlibrary{arrows,shapes,snakes,backgrounds} 
\usepackage{verbatim}

\usepackage[all]{xy}

\definecolor{DarkBlue}{rgb}{0,0,0.8} 
\definecolor{DarkGreen}{rgb}{0,0.5,0.0}

\addtolength{\oddsidemargin}{-.0in}
\addtolength{\evensidemargin}{-.0in}
\addtolength{\textwidth}{.0in}

\newtheorem*{lemma*}{Lemma}

\newtheorem*{corollary*}{Corollary}
\newtheorem*{theorem*}{Theorem}
\newtheorem*{theorem1*}{Theorem \ref{thm:vol}}
\newtheorem*{theorem2*}{Theorem \ref{thm:vol2}}
\newtheorem*{theorem3*}{Theorem \ref{thm:volC}}

\numberwithin{equation}{section}

\newtheorem{theorem}{Theorem}[section]
\newtheorem{proposition}[theorem]{Proposition}
\newtheorem{lemma}[theorem]{Lemma}
\newtheorem{corollary}[theorem]{Corollary}
\newtheorem{conjecture}[theorem]{Conjecture} 
\theoremstyle{remark}

\newtheorem{example}{Example}[section]
\theoremstyle{definition}
\newtheorem{definition}[equation]{Definition}

\newcommand\Cat{\operatorname{Cat}}
\newcommand\CT{\operatorname{CT}}
\newcommand\vol{\operatorname{vol}}

\def\ee{{\bf e}}
  \def\e{\epsilon}

\def\indeg{{\rm indeg}} 
 \def\vol{{\rm vol}}
 
\def\vv{{\rm v}}

\def\v{{\bf a}}

 \def\f_H{{\bf w}}
 \def\f{{\bf f}}
 
\def\R{\mathbb{R}}

\def\Z{\mathbb{Z}}
\def\g{{\bf a}}
\def\inc{\textrm{inc}}
 \def\F{\mathcal{F}}
\def\O{\mathcal{O}}
\def\I{\mathcal{I}}

\def\ee{{\bf e}}
  \def\e{\epsilon}
   \def\vol{{\rm vol}}
 
\def\vv{{\rm v}}

\def\aa{{\bf a}}
 \def\f_H{{\bf w}}
 \def\f{{\bf f}}
 
\def\R{\mathbb{R}}

\def\Z{\mathbb{Z}}
\def\g{{\bf a}}
\def\inc{\textrm{inc}}
 \def\F{\mathcal{F}}
\def\O{\mathcal{O}}
\def\I{\mathcal{I}}

\def\CT{CT}
\def\ee{{\bf e}}
  \def\e{\epsilon}
   \def\vol{{\rm vol}}
 
\def\vv{{\rm v}}

\def\v{{\bf a}}
\def\aa{{\bf a}}
 \def\f_H{{\bf w}}
 \def\f{{\bf f}}
 
\def\vvv{{\bf v}}
\def\R{\mathbb{R}}

\def\Z{\mathbb{Z}}
\def\g{{\bf a}}
\def\inc{\textrm{inc}}
 \def\F{\mathcal{F}}
\def\O{\mathcal{O}}
\def\I{\mathcal{I}}

\newcommand{\be}{\begin{equation}}

\newcommand{\eee}{\end{equation}}

\newcommand{\bd}{\begin{definition}}
\newcommand{\ed}{\end{definition}}
\newcommand{\bt}{\begin{theorem}}
\newcommand{\et}{\end{theorem}}
\newcommand{\bl}{\begin{lemma}}
\newcommand{\el}{\end{lemma}}
\newcommand{\bp}{\begin{proposition}}
\newcommand{\ep}{\end{proposition}}
\newcommand{\bc}{\begin{corollary}}
\newcommand{\ec}{\end{corollary}}

\def\R{\mathbb{R}}
\def\v{{\rm v}}
\begin{document}

\tikzstyle{w}=[label=right:$\textcolor{red}{\cdots}$] 
\tikzstyle{b}=[label=right:$\cdot\,\textcolor{red}{\cdot}\,\cdot$] 
\tikzstyle{bb}=[circle,draw=black!90,fill=black!100,thick,inner sep=1pt,minimum width=3pt] 
\tikzstyle{bb2}=[circle,draw=black!90,fill=black!100,thick,inner sep=1pt,minimum width=2pt] 
\tikzstyle{b2}=[label=right:$\cdots$] 
\tikzstyle{w2}=[]
\tikzstyle{vw}=[label=above:$\textcolor{red}{\vdots}$] 
\tikzstyle{vb}=[label=above:$\vdots$] 

\tikzstyle{level 1}=[level distance=3.5cm, sibling distance=3.5cm]
\tikzstyle{level 2}=[level distance=3.5cm, sibling distance=2cm]

\tikzstyle{bag} = [text width=4em, text centered]
\tikzstyle{end} = [circle, minimum width=3pt,fill, inner sep=0pt]

\title[Volumes of generalized Chan-Robbins-Yuen polytopes]{Volumes of generalized Chan-Robbins-Yuen polytopes}
\author{Sylvie Corteel}
\address{Sylvie Corteel, IRIF, CNRS et Universit\'e Paris Diderot, 75205 Paris Cedex 13, France.
{corteel@irif.fr}}

\author{Jang Soo Kim}
\address{Sungkyunkwan University,
2066 Seobu-ro, Jangan-gu, 
Suwon, Gyeonggi-do 16419,
South Korea. {jangsookim@skku.edu}}

\author{Karola M\'esz\'aros}
\address{Karola M\'esz\'aros, Department of Mathematics, Cornell University, Ithaca NY 14853.  \newline{ karola@math.cornell.edu}
}

\date{\today}

\begin{abstract} The normalized volume of the Chan-Robbins-Yuen   polytope ($CRY_n$) is the product of consecutive Catalan numbers. The  polytope $CRY_n$ has captivated combinatorial audiences for over a decade, as there is no combinatorial proof for its volume formula. In their quest to understand $CRY_n$  better, the third author and Morales introduced two natural generalizations of it and conjectured that their volumes are certain powers of $2$ multiplied by a product of consecutive Catalan numbers.  Zeilberger proved one of these conjectures. In this paper we present proofs of both conjectures. 
\end{abstract}

\maketitle

\section{Introduction}
\label{sec:intro}

The Chan-Robbins-Yuen polytope ($CRY_n$)  has captivated combinatorialists for nearly two decades since its introduction in \cite{cry}. Chan, Robbins and Yuen defined $CRY_n$ as the convex hull of the set of $n\times n$ permutation matrices $\pi$ with $\pi_{ij}=0$ if $j\geq i+2$. The polytope $CRY_n$  is  integrally equivalent to  the (type A) flow polytope of the complete graph $K_{n+1}$ with netflow vector  $(1,0, \ldots, 0, -1)$ \cite{mm}. (We define these in Section \ref{sec:flowsdefs}.) Recall  that integer polytopes $\mathcal{P}\subset \R^m$ and $\mathcal{Q}\subset \R^k$ are \textbf{integrally equivalent} if there is an affine transformation $f:\R^m\rightarrow \R^k$ such that $f$ maps $\mathcal{P}$ bijectively onto $\mathcal{Q}$ and $f$ maps $\Z^m\cap  \operatorname{aff}(\mathcal{P})$ bijectively onto $\Z^k\cap \operatorname{aff}(\mathcal{Q})$, where  $ \operatorname{aff}$ denotes affine span. If two polytopes are  integrally equivalent, then they have the same combinatorial type as well as the same volume and more generally the same Ehrhart polynomial.

  Recall  that the \textbf{Ehrhart polynomial} $i(\mathcal{P}, t)$ of an integer polytope $\mathcal{P} \subset \R^m$ counts the  number of integer points of dilations of  the polytope, $i(\mathcal{P}, t):=\#(t \mathcal{P}\cap \Z^m)$. Its leading coefficient is the {\bf volume} of the polytope.   The  {\bf normalized volume} $\vol(P)$ of a 
$d$-dimensional polytope $\mathcal{P} \subset \mathbb{R}^m$ is the volume form which 
 assigns a volume of one to the smallest $d$-dimensional integer simplex in the affine span of $\mathcal{P}$. In other words, the normalized volume of a
$d$-dimensional polytope $\mathcal{P}$ is $d!$ times its  volume.

The polytope $CRY_n$ is  a face of the Birkhoff polytope, the polytope of all doubly stochastic matrices, prominent in combinatorial optimization. Remarkably, the normalized volume of the $CRY_n$ polytope is the product of the first $n-2$ Catalan numbers, as conjectured by Chan, Robbins and Yuen in \cite{cry} and proved by Zeilberger in \cite{Z}.

 \begin{theorem} \label{thm:cry} \cite{cry, Z} The normalized volume of $CRY_n$ is \begin{equation}  \vol(CRY_n)=\prod_{i=1}^{n-2} \Cat(i),\end{equation} where $\Cat(i)=\frac{1}{i+1}{{2i} \choose {i}}$ is the Catalan number.
 \end{theorem}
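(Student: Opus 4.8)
The plan is to exploit the integral equivalence recorded in the introduction and thereby reduce the statement to the computation of a single flow-polytope volume, which I would then evaluate by a constant term identity. Since $CRY_n$ is integrally equivalent to the flow polytope $\F_{K_{n+1}}(1,0,\dots,0,-1)$, we have $\vol(CRY_n)=\vol\F_{K_{n+1}}(1,0,\dots,0,-1)$, and this polytope has dimension $\binom{n}{2}$. All of the difficulty is therefore concentrated in computing the normalized volume of this one flow polytope, and no amount of polytope combinatorics will avoid it.

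First I would write this volume as an iterated constant term. The volume of a flow polytope whose netflow is supported only on the source and the sink admits a clean constant-term description: one builds a rational function from the edges $i\to j$ of $K_{n+1}$ (equivalently one invokes the Lidskii/Postnikov--Stanley volume formula, which for the netflow $(1,0,\dots,0,-1)$ collapses to a single summand because only the source coordinate is nonzero). Performing the reductions at the interior vertices $2,\dots,n$ rewrites $\vol\F_{K_{n+1}}(1,0,\dots,0,-1)$ as a constant term of Selberg--Morris type, schematically of the form
\[
\CT_{x_1,\dots,x_{n-1}}\ \prod_{1\le i<j\le n-1}(x_i-x_j)^2\cdot\prod_{i=1}^{n-1}x_i^{a_i}(1-x_i)^{b_i},
\]
where the exponents $a_i,b_i$ are determined by the degree data of $K_{n+1}$ and depend linearly on $i$, and the product-of-differences factor is the contribution of the complete graph.

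The crucial step is to recognize this as an instance of the Morris constant term identity (equivalently, of a Selberg integral); this is the essence of Zeilberger's argument. Morris's identity evaluates
\[
\CT_{x_1,\dots,x_k}\ \prod_{i=1}^{k}x_i^{a}(1-x_i)^{b}\prod_{1\le i<j\le k}(x_i-x_j)^{2\gamma}
\]
in closed product form as a ratio of Gamma values. For the parameter values forced by the flow polytope (here the case $\gamma=1$, so that the squared Vandermonde appears and all Gamma arguments are integral), the right-hand side telescopes, and after cancellation the ratio of factorials collapses precisely to the product $\prod_{i=1}^{n-2}\Cat(i)$ of consecutive Catalan numbers.

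I expect the main obstacle to be this matching-and-evaluation step rather than the setup. As the abstract stresses, no combinatorial proof is known, so there is no bijective shortcut: everything rests on identifying the volume with the correct instance of Morris's identity and then verifying that the resulting Gamma-function ratio simplifies to consecutive Catalan numbers. A secondary but delicate point is the bookkeeping of the exponents $a_i,b_i$ and the power of the Vandermonde produced by the vertex-by-vertex reduction, since any slip there changes the Gamma ratio and destroys the telescoping that yields $\prod_{i=1}^{n-2}\Cat(i)$.
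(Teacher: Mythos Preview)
Your overall strategy is exactly the one the paper (following Zeilberger) uses: express the volume of $\F_{K_{n+1}}(1,0,\dots,0,-1)$ as an iterated constant term and evaluate it by the Morris identity. However, the concrete form and parameters you write down are wrong in a way that would make the argument fail.

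The constant term one actually obtains for $CRY_n$ is, in the paper's notation (see \eqref{eq:zeilberger} and \eqref{eq:cat}),
\[
\vol(CRY_n)=\CT_{x_{n-2}}\cdots\CT_{x_1}\ \prod_{i=1}^{n-2}(1-x_i)^{-2}\prod_{1\le i<j\le n-2}(x_j-x_i)^{-1},
\]
so the exponents on $x_i$ and $1-x_i$ are \emph{constants} (namely $b=0$ and $a=2$), not $i$-dependent, and the Vandermonde enters to the \emph{first} power in the denominator, i.e.\ $c=\tfrac12$ in the Morris identity, not $\gamma=1$. Your ``$\gamma=1$, squared Vandermonde, all Gamma arguments integral'' is precisely backwards: with $c=\tfrac12$ the Gamma arguments are half-integers, and the collapse to $\prod_i\Cat(i)$ hinges on the Legendre duplication $\Gamma(\tfrac12)\Gamma(m)=2^{m-1}\Gamma(\tfrac m2)\Gamma(\tfrac{m+1}2)$. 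If you plug $\gamma=1$ into Morris you get a different product (essentially a Selberg value), not consecutive Catalan numbers. This is not a cosmetic slip: as you yourself note at the end, the whole proof lives in matching the correct parameters, and the ones you propose do not match.
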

  
 Zeilberger proved Theorem \ref{thm:cry} analytically via constant terms identities.  Despite the combinatorial volume formula, his theorem still lacks a combinatorial proof. In a quest to broaden the view on  $CRY_n$ and flow polytopes in general, the third author and Morales  introduced and studied signed  flow polytopes in \cite{mm}, and defined   types $C$ and $D$ analogues of the Chan-Robbins-Yuen polytope, $CRYC_{n+1}$ and $CRYD_{n+1}$. They conjectured:

\begin{conjecture} \cite[Conjecture 7.6]{mm}, \cite[Zeilberger's theorem]{Z1} \label{conjcryD} Let $CRYD_{n+1}$ be the flow polytope $\mathcal{F}_{K_{n+1}^{D}}(2,0,\ldots,0)$ where $K^{D}_{n+1}$ is the complete signed graph with $n+1$ vertices (all edges of the form $(i,j,\pm)$, $1\leq i<j\leq n+1$). Then the normalized volume of $CRYD_{n+1}$ is
\[
\vol(CRYD_{n+1}) = 2^{(n-1)^2} \prod_{k=0}^{n-1} \Cat(k).
\]
\end{conjecture}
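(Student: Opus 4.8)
The plan is to reduce the volume to a multivariate constant-term evaluation, following the analytic strategy that Zeilberger used for $CRY_n$ (Theorem \ref{thm:cry}) and for the type $C$ conjecture. First I would invoke the Lidskii-type volume formula of Baldoni--Vergne, adapted to signed graphs as in \cite{mm}: for a flow polytope $\mathcal{F}_G(\mathbf a)$ the normalized volume is a weighted sum of values of the (signed) Kostant partition function $K_G$, with weights $\binom{d}{\mathbf j}\prod_i a_i^{j_i}$, where $d=\dim\mathcal{F}_G(\mathbf a)$. Since the netflow vector $\mathbf a=(2,0,\dots,0)$ is supported on a single coordinate, every term with $j_i\neq 0$ for some $i\ge 2$ vanishes, so the sum collapses to a single term. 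I expect this to give
\[
\vol(CRYD_{n+1}) = 2^{\,d}\, K_{K_{n+1}^D}(\mathbf v_0),\qquad d=\dim CRYD_{n+1},
\]
for an explicit vector $\mathbf v_0$ determined by the out-degrees of a chosen orientation of $K_{n+1}^D$.

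Next I would write the surviving Kostant number as a constant term. Assigning to the positive edge $(i,j,+)$ the factor $(1-x_i x_j^{-1})^{-1}$ and to the negative edge $(i,j,-)$ the factor $(1-x_i x_j)^{-1}$, the generating function of the signed Kostant partition function is
\[
\sum_{\mathbf v} K_{K_{n+1}^D}(\mathbf v)\,x^{\mathbf v} = \prod_{1\le i<j\le n+1}\frac{1}{(1-x_i x_j^{-1})(1-x_i x_j)},
\]
so that $K_{K_{n+1}^D}(\mathbf v_0)$ is the constant term of $x^{-\mathbf v_0}$ times this product. The conjecture is then equivalent to the assertion that this constant term equals $2^{(n-1)^2-d}\prod_{k=0}^{n-1}\Cat(k)$, a Morris/Selberg-type identity attached to the type $D$ (equivalently $BC$) root system.

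The main work, and the expected obstacle, is proving this constant-term identity. I would first try to match it against a known Morris-type evaluation; failing a direct match, I would argue by induction on $n$, peeling off the variable $x_{n+1}$ (the last vertex of $K_{n+1}^D$). Extracting the contribution of $x_{n+1}$ should reduce the $(n+1)$-variable constant term to the $n$-variable instance times a one-variable residue computation, and the crux is to show that this residue contributes exactly the factor $\Cat(n-1)$ together with the correct power of $2$ arising from the two signed edges incident to $x_{n+1}$. This is precisely where the Catalan recursion and the doubling coming from negative edges must interlock, and it is the analytic heart of the argument; as for $CRY_n$ itself, I do not expect a purely combinatorial shortcut here. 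As a fallback for setting up or even evaluating the same constant term, I would keep in reserve the reduction/subdivision techniques for signed flow polytopes, which can recast the volume as a lattice-point count should the direct induction prove unwieldy.
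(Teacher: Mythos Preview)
Your reduction step contains a genuine gap. The Lidskii--Baldoni--Vergne formula, as adapted to signed graphs in \cite{mm}, does \emph{not} give $\vol=2^{d}K_G(\mathbf v_0)$ for the ordinary Kostant partition function; Theorem~\ref{volD} (which is \cite[Theorem 6.9]{mm}) states instead that $\vol(\mathcal F_G(2,0,\dots,0))=K_G^{\mathrm{dyn}}(0,d_2,\dots,d_{n+1})$, a \emph{dynamic} Kostant partition function value, with no extra power of $2$. The generating series of $K_G^{\mathrm{dyn}}$ carries factors $(1-x_i-x_j)^{-1}$ for positive edges (Proposition~\ref{prop:Kdyngs}), not $(1-x_ix_j)^{-1}$; this is exactly what produces the constant term in \eqref{eqCRYDMorrislike}. (Separately, you have the sign conventions swapped: the negative edge $(i,j,-)$ corresponds to $e_i-e_j$ and hence to $(1-x_ix_j^{-1})^{-1}$, while the positive edge gives $(1-x_ix_j)^{-1}$.) So the constant-term identity you would land on is not the one that actually encodes $\vol(CRYD_{n+1})$.

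Once one has the correct constant term \eqref{eqCRYDMorrislike}, the paper's evaluation is also quite different from your proposed induction. Rather than peeling off one variable at a time, the paper rewrites each $\CT$ as a contour integral and applies three global substitutions $x_j=\tfrac{1-z_j}{2}$, then $y_j=z_j^2$, then $t_j=1-y_j$; this collapses the type $D$ integrand $\prod (x_k-x_j)^{-2c}(1-x_k-x_j)^{-2c}$ into the ordinary Morris integrand $\prod (t_k-t_j)^{-2c}$, after which the Morris identity \eqref{eq:zeilberger} applies directly (Theorem~\ref{thm:C} with $a=2,b=0,c=1/2$). Your inductive scheme for such Selberg--Morris constant terms is not known to go through cleanly, and in any case the paper avoids it entirely by this change-of-variables reduction.
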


\begin{conjecture} \cite[Conjecture 7.8]{mm} \label{conjcryC}   Let $CRYC_{n+1}$ be the flow polytope $\mathcal{F}_{K_{n+1}^{C}}(2,0,\ldots,0)$ where $K^{C}_{n+1}$ is the complete signed graph with $n+1$ vertices (all edges of the form $(i,j,\pm)$ for $1\leq i< j\leq n+1$ and $(i,i,+)$ for $1\le i\le n$). Then the normalized volume of $CRYC_{n+1}$ is
\begin{equation}
\vol(CRYC_{n+1}) = 2^{n(n-1)} \prod_{k=0}^{n-1} \Cat(k).
\end{equation}
\end{conjecture}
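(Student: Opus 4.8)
The plan is to convert the volume into a constant term and then evaluate it. Since $CRYC_{n+1}$ is a flow polytope, the lattice points of its flow cone are counted by the Kostant partition function of $K^C_{n+1}$, whose generating function is
\[
\prod_{1\le i<j\le n+1}\frac{1}{1-x_i/x_j}\;\prod_{1\le i<j\le n+1}\frac{1}{1-x_ix_j}\;\prod_{1\le i\le n}\frac{1}{1-x_i^2},
\]
the three factors recording the positive edges $e_i-e_j$, the negative edges $e_i+e_j$, and the loops $2e_i$. I would feed this into the Lidskii/Baldoni--Vergne volume formula; because the netflow $(2,0,\dots,0)$ is supported at the single vertex $1$, that formula simplifies to a single constant-term extraction from the generating function above. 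The dilation identity $\mathcal F_{G}(2\mathbf a)=2\,\mathcal F_{G}(\mathbf a)$ explains both why the netflow must be $2$ for $CRYC_{n+1}$ to be a lattice polytope and where the global powers of $2$ in the answer originate: the loop and negative-edge factors each carry a $2$ (from $x_i^2$ and $x_ix_j$), while the positive-edge factors behave as in the ordinary complete graph $K_{n+1}$ underlying $CRY_n$.

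The reduction turns Conjecture~\ref{conjcryC} into a constant-term identity with right-hand side $2^{n(n-1)}\prod_{k=0}^{n-1}\Cat(k)$. It is instructive that $CRYC_{n+1}$ and $CRYD_{n+1}$ share the \emph{same} Catalan product and differ only in the power of $2$; indeed $\vol(CRYC_{n+1})/\vol(CRYD_{n+1})=2^{\,n(n-1)-(n-1)^2}=2^{\,n-1}$, and the type $C$ constant term differs from Zeilberger's type $D$ one precisely by the loop factor $\prod_{i}(1-x_i^2)^{-1}$. I see two routes. \emph{Directly:} recognize the constant term as a symplectic Morris--Selberg evaluation and compute it by the standard tools (an Aomoto-type recursion, or the creative-telescoping certificate underlying Zeilberger's type $D$ proof), ideally packaging $C$ and $D$ as specializations of a single master identity so that both conjectures fall out together. \emph{Comparatively:} prove that adjoining the loops $2e_1,\dots,2e_n$ to $K^D_{n+1}$ scales the normalized volume by exactly $2^{\,n-1}$, and then invoke the known type $D$ theorem of Conjecture~\ref{conjcryD}.

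The crux is the evaluation of the type $C$ constant term, for which there is no prior proof. In the direct route the obstacle is that the diagonal loop factors push the integrand outside the classically tabulated Morris/Selberg constant terms, so one must either pinpoint the correct closed form or manufacture the telescoping certificate by hand. In the comparative route the obstacle is subtler than it looks: adjoining a loop $2e_i$ trades the equality constraint at vertex $i$ for an inequality and \emph{raises} the dimension by one, so the volume changes by an integral of the lower polytope's volume polynomial rather than by a clean factor of $2$, and one must show these $n$ integrals conspire to give $2^{\,n-1}$ (with one loop --- plausibly the one at the source vertex $1$ that already carries netflow --- contributing trivially). I would justify the initial reduction to a constant term using the subdivision lemma (Lemma~\ref{lem:nct}), and then bet on the direct route, adapting Zeilberger's constant-term machinery, as the cleaner path to a complete and unified proof of both conjectures.
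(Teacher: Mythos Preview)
Your proposal has the right large-scale architecture (reduce to a constant term, then evaluate it via a Morris/Selberg-type identity), and your instinct to package $C$ and $D$ into a single master identity is exactly what the paper does in Theorem~\ref{thm:C}. However, there is a genuine gap in the first step, and a related confusion about \emph{which} constant term arises.

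The reduction to a constant term is not routine here. The Baldoni--Vergne/Lidskii machinery and the volume$\,=\,$dynamic Kostant partition function result (Theorem~\ref{volD}) are proved only for \emph{loopless} signed graphs; the paper explicitly notes that Theorem~\ref{volD} fails for general graphs with loops and hence does not apply to $K_{n+1}^C$. So you cannot simply ``feed the generating function into the Lidskii formula.'' Establishing the correct constant-term expression for $\vol(CRYC_{n+1})$ is in fact the main combinatorial contribution of the paper: one first uses the reduction rule (R6) to eliminate the loops at vertices $2,\dots,n+1$, tracking which descendants preserve the dimension (Theorem~\ref{1}); one then shows via an explicit bijection (Theorem~\ref{3}) that the resulting sum of loopless volumes equals $K^{\mathrm{dyn}}_{K_{n+1}^C}(0,0,1,\dots,n-1)$, which finally yields the constant term (Lemma~\ref{2}). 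Your sketch skips precisely this work.

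There is also a mismatch in the generating function. The product you wrote, with factors $(1-x_ix_j)^{-1}$ and $(1-x_i^2)^{-1}$, is the \emph{ordinary} Kostant partition function of $K_{n+1}^C$. The volume is governed instead by the \emph{dynamic} Kostant partition function (Proposition~\ref{prop:Kdyngs}), whose positive-edge and loop factors are $(1-x_i-x_j)^{-1}$ and $(1-2x_i)^{-1}$. Consequently the type~$C$ constant term differs from the type~$D$ one by $\prod_i(1-2x_i)^{-1}$, not by $\prod_i(1-x_i^2)^{-1}$ as you wrote. Once the correct constant term is in hand, the evaluation proceeds much as you envisioned: the paper proves a single identity (Theorem~\ref{thm:C}) by three changes of variable in the contour-integral form, reducing to the classical Morris identity, and both Conjectures~\ref{conjcryD} and~\ref{conjcryC} drop out as the specializations $b=0$ and $b=1$.
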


For details on notation in the above conjectures consult Section \ref{sec:flowsdefs}.  We note that in \cite[p. 834, Conjecture 7.6]{mm} the formula for $\vol(CRYC_{n+1})$ has a typo giving an additional factor of 2. 

\medskip

In \cite{Z1} Zeilberger  proved Conjecture \ref{conjcryD}. In this paper we prove  Conjecture \ref{conjcryC}, by understanding the volume of $CRYC_{n+1}$ in combinatorial terms and translating this understanding to a new constant term identity which we prove with analytic tools. We also give a detailed proof of  Zeilberger's theorem \cite{Z1}, formerly Conjecture \ref{conjcryD}. 

\medskip

\textbf{The outline of the paper} is as follows. In Section \ref{sec:flowsdefs} we give the background on type C flow polytopes (of which the type D flow polytopes are a special case where the graph has no loops) and define  $CRYC_{n+1}$ and $CRYD_{n+1}$. In Section \ref{sec:volumes} we explain how to express the volumes of $CRYC_{n+1}$ and $CRYD_{n+1}$ as constant term identities. In Section \ref{sec:vol} we prove Conjecture \ref{conjcryC} using our insights from Section \ref{sec:volumes} and constant term identity techniques. We  also present a proof of Conjecture \ref{conjcryD} for completeness. In Section \ref{sec:conc} we conclude by a discussion of open problems.

 \section{Type $C_{n+1}$ flow polytopes}
\label{sec:flowsdefs}

Much of this section  follows  the exposition in \cite{mm}. The figures are also borrowed from \cite{mm} with permission. For further details see \cite{mm}.

\subsection{Signed graphs, Kostant partition functions and flows} 
We consider  \textbf{signed graphs} $G$ on the vertex set $[n+1]:=\{1,2,\dots,n+1\}$, which are graphs such that there is a sign $\e \in \{+, -\}$ assigned to each of their edges. We allow loops and multiple edges. The sign of a loop is always $+$, and a loop at vertex $i$ is denoted by $(i, i, +)$. Denote by $(i, j, -)$ and $(i, j, +)$, $i < j$, a negative and a positive edge between vertices $i$ and $j$, respectively.  A positive edge, that is an edge labeled by $+$, is {\bf positively incident}, or, {\bf incident with a positive sign}, to both of its endpoints.  A negative  edge is positively incident to its smaller endpoint and  {\bf negatively  incident} to  its greater endpoint. 
Denote by $m_{ij}^\e$ the multiplicity of edge $(i, j, \e)$ in $G$, $i\leq j$, $\e \in \{+, -\}$. 
To each edge $(i, j, \e)$, $i\leq j$,  of $G$,  associate the positive
type $C_{n+1}$ root $\vv(i,j, \e)$, where $\vv(i,j, -)=\ee_i-\ee_j$
and $\vv(i,j, +)=\ee_i+\ee_j$. Let $S_G := \{\{\vvv_1, \ldots, \vvv_N\}\}$ be the
multiset of roots corresponding to the multiset of edges of $G$. Note that $N=\sum_{1\leq i\leq j\leq n+1}
(m_{ij}^-+m_{ij}^+)$.

 For a signed graph $G$ the {\bf Kostant partition function}  $K_G$ evaluated at the vector $\v \in \Z^{n+1}$ is defined as

$$K_G(\v)= \# \Big\{ (b_{k})_{k \in [N]} \Bigm\vert \sum_{k \in [N]} b_{k}  \vvv_k =\v \textrm{ and } b_{k} \in \Z_{\geq 0}\Big\}.$$

That is, $K_G(\v)$ is the number of ways to write the vector $\v$ as
an $\mathbb{N}$-linear combination of the positive type $C_{n+1}$
roots $\vvv_k$ corresponding to the edges of $G$, without regard to order.  

In this paper positive edges will be colored red and negative edges will be colored black.

 \begin{example}
For the signed graph $G$ in Figure \ref{AAA}, $K_G(1,3,-2)=3,$ since $(1,3,-2)=(\ee_1-\ee_3) + (2\ee_2) + (\ee_2-\ee_3)=(\ee_1+\ee_2) + 2(\ee_2-\ee_3)=(\ee_1-\ee_2)+(2\ee_2)+2(\ee_2-\ee_3)$.
\end{example}

\begin{figure}
\begin{center}
\subfigure[]{
\includegraphics[width=10cm]{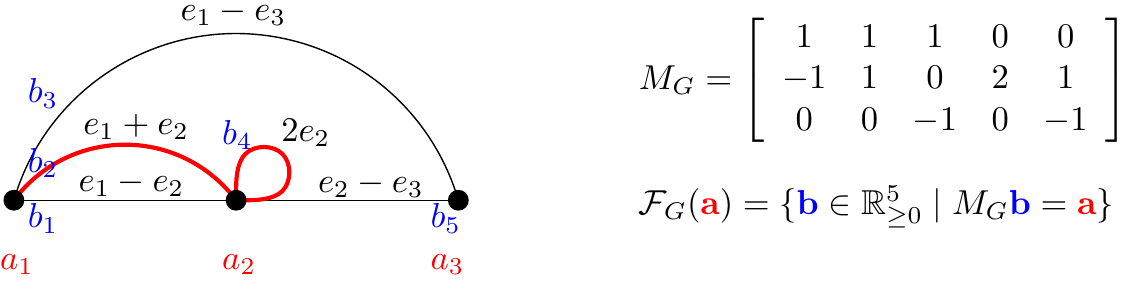}
\label{AAA}
}
\quad
\subfigure[]{
\includegraphics[width=4cm]{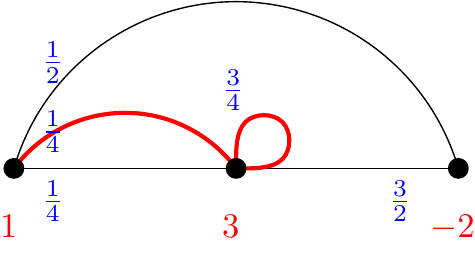}
\label{exflow}
}
\caption{ (a)
A signed graph $G$ on three vertices and the positive roots associated
with each of the five edges. The columns of the matrix $M_G$
correspond to these roots.  The flow polytope $\F_G(\g)$ consists of
the flows ${\bf b}\in \R_{\geq 0}^5$ such that $M_G{\bf b} = {\bf a}$
where ${\bf a}$ is the netflow vector.   The Kostant partition function $K_{G}({\bf a})$ 
counts the lattice points of $\F_G(\g)$, the number of ways of obtaining ${\bf a}$ as an $\mathbb{N}$-integer combination of the roots associated to $G$.\newline
(b) A nonnegative flow on $G$ with  {netflow vector} ${\bf a}=(1,
3,-2)$. The flows on the edges are included. Note that the total flow
on the positive edges is ${\frac14} + {\frac34}=1=\frac12({1}+{3}-{2})$.}
\end{center}
\end{figure}

Let $G$ be a signed graph on the vertex set $[n+1]$, and $M_G$ be the
$(n+1)\times N$ matrix whose columns are the vectors in $S_G$.  Fix an
integer  vector $\g=(a_1, \ldots, a_n, a_{n+1}) \in \Z^{n+1}$ which we
call the {\bf netflow}.  An {\bf $\g$-flow} $\f_G$ on $G$ is a vector $\f_G=(b_k)_{k \in [N]}$, $b_k \in \R_{\geq 0}$  such that $M_G \f_G=\g$. That is, for all $1\leq v \leq n+1$, we have 

 \begin{equation} \label{eqn:flow}
a_v+ \sum_{\substack{e \in E(G),\\ \inc(e, v )=-}} b(e)= \sum_{\substack{e \in E(G),\\ \inc(e, v)=+}} b(e)+\sum_{e=(v, v, + )} b(e),
  \end{equation}
  
\noindent   where $b(e_k)=b_k$, $\inc(e, v)=-$ if $e=(g, v, -)$, $g <v$, and $\inc(e, v)=+$ if $e=(g, v, +)$, $g <v$, or $e=( v, j, \e)$, $v <j,$ and $\e \in \{+, -\}$. 

\begin{example}
Figure \ref{exflow} shows a signed graph $G$ with three vertices with flow assigned to each edge. The netflow is ${\bf a}=(1,3,-2)$. We can check that \eqref{eqn:flow} holds for this example.
Indeed we have $1=\frac{1}{2}+\frac{1}{4}+\frac{1}{4}$, $3+\frac{1}{4}=\frac{1}{4}+\frac{3}{4}+\frac{3}{2}+\frac{3}{4}$, etc.
\end{example}

Call $b(e)$  the {\bf flow} assigned to edge $e$ of $G$. If the edge $e$ is negative, one can think of $b(e)$ units of fluid flowing on $e$ from its smaller to its bigger vertex. If the edge $e$ is positive, then one can think of $b(e)$ units of fluid flowing away both from $e$'s smaller and bigger vertex to ``infinity.'' Edge $e$ is then a ``leak" taking away $2b(e)$ units of fluid.

From the above explanation it is clear that if we are given an $\aa$-flow $\f_G$ such that 
\begin{equation} \label{eq:flowconstraint}
\sum_{i=1}^{n+1}a_i=2y,
\end{equation}
for some positive integer $y$ then  $\sum_{e=(i, j, +)}b(e)=y$. Using again the example on Figure  \ref{exflow}, $ \sum_{i=1}^{n+1}a_i=1+3-2=2\sum_{e=(i, j, +)}b(e)=2\left(\frac{1}{4}+\frac{3}{4}\right)$.
 
 An {\bf  integer} {\bf $\g$-flow} $\f_G$ on $G$ is an $\g$-flow
 $\f_G=(b_i)_{i \in [N]}$, with $b_i \in \Z_{\geq 0}$. It is a
 matter of checking the definitions to see that for a signed graph $G$
 on the vertex set $[n+1]$ and vector  $\g=(a_1, \ldots, a_n, a_{n+1})
 \in \Z^{n+1}$,  the number of  integer $\g$-flows on $G$ is given by
 the Kostant partition function $K_G(\g)$.

Define the {\bf flow polytope} $\F_G(\g)$ associated to a signed graph $G$ on the vertex set $[n+1]$ and the integer vector $\aa=(a_1, \ldots, a_{n+1})$ as the set of all $\g$-flows $\f_G$ on $G$, i.e., $\F_G=\{\f_G \in \R^N_{\geq 0} \mid M_G \f_G = \g\}$. The flow polytope 
  $\F_G(\g)$ then naturally lives in $\mathbb{R}^{N}$, where $N$ is the number  of edges of $G$.

Classical  type $A_n$ flow polytopes are type $C_{n+1}$ flow polytopes $\F_G(\g)$ such that the graph $G$ has only negative edges. 

 From the definition of the Ehrhart polynomial  and the Kostant partition function it follows that
\begin{equation} \label{ehr} \displaystyle i({\F_G(\g)}, t) = K_G(t \g). \end{equation}

\subsection{Chan-Robbins-Yuen polytopes}
 We think of the Chan-Robbins-Yuen polytope $CRY_n$ as the flow polytope of 
 the (unsigned) complete graph on $n+1$ vertices $\mathcal{F}_{K_{n+1}}(1,0,\ldots,0,-1)$ (since they are integrally equivalent).    Zeilberger computed  the normalized volume of this polytope (Theorem \ref{thm:cry}) using the {Morris identity} \cite[Thm. 4.13]{WM}.   

Let $K^{D}_{n+1}$ be the complete signed graph on $n+1$ vertices, that is, its edges are of the form $(i,j,\pm)$ for $1\leq i<j\leq n+1$ corresponding to all the positive roots in type $D_{n+1}$. Let $CRYD_{n+1}=\mathcal{F}_{K_{n+1}^{D}}(2,0,\ldots,0)$ be the type $D$  analogue of the Chan-Robbins-Yuen polytope. Similarly, let $K^{C}_{n+1}$ be the signed graph on $n+1$ vertices with  edges of the form $(i,j,\pm)$ for $1\leq i< j\leq n+1$ and $(i,i,+)$ for $i \in [n+1]$,  corresponding to all the positive roots in type $C_{n+1}$. Let $CRYC_{n+1}=\mathcal{F}_{K_{n+1}^{C}}(2,0,\ldots,0)$  be the type $C$  analogue of the Chan-Robbins-Yuen polytope.  Conjectures  \ref{conjcryD} and  \ref{conjcryC} concern these polytopes, and are the subject of this paper.

\subsection{Dynamic integer flows} 
Given a signed graph $G$ and an edge $e=(i,j,+)$ of $G$, we will regard $e=(i,j,+)$ as two positive {\bf half-edges} $(i,\varnothing,+)$ and $(\varnothing,j,+)$ that still have ``memory'' of being together (see Figure \ref{fig:dynflow} (a)). We assign nonnegative {\bf integer} flows $b_{\ell}(e)$ and $b_{r}(e)$ to the left and right halves of the positive edge, starting at the left half-edge. Once we assign $b_{\ell}(e)$ units of flow, {\bf we add $b_{\ell}(e)$ {\bf extra right positive half-edges}}  {\bf incident to $j$}. Any right positive half-edge $e'$ is assigned a nonnegative integer flow $b_r(e')$ (whether it was an extra right positive half-edge, or an original one). When we assign a nonnegative integer flow to a right positive half-edge no edges of any kind are added making the process of adding extra edges to the graph finite.

An analogue of Equation \eqref{eqn:flow} still holds:
\begin{equation}\label{eqn:dynflow}
a_i+ \sum_{e \in \I_i(G)} b(e)= \sum_{e \in \O_i^-(G)} b(e) + \sum_{e=(i,\cdot,+) \in \O_i^+(G)} b_{\ell}(e)+\sum_{e=(\cdot,i,+)\in \O_i^+(G)} b_{r}(e) + \sum_{\text{extra right} \atop \text{half-edges $e'=(\varnothing,i,+)$}} b_{r}(e'),
\end{equation}
where $a_i$ is the netflow at vertex $i$ and $\I_i(G)$, $\O^-_i(G)$, and $\O^+_i(G)$ are as follows. Given a signed graph $G$ and one of its vertices $i$, let $\mathcal{I}_i=\mathcal{I}_i(G)$ be the multiset of {\bf incoming edges} to $i$, which are defined as negative edges of the form $(\cdot,i,-)$. Let $\mathcal{O}_i=\mathcal{O}_i(G)$ be the multiset of {\bf outgoing edges} from $i$, which are defined as  edges of the form $(\cdot,i, +)$ and $(i,\cdot,\pm)$. Finally,  let $\mathcal{O}_i^{\pm}$ be the signed refinement of $\mathcal{O}_i$. Define $\indeg_G(i):=\#\mathcal{I}_i(G)$ to be  the {\bf indegree} of vertex $i$ in $G$. 

We call the integer ${\bf a}$-flows of equation \eqref{eqn:dynflow} {\bf dynamic}. 

For the signed graph $G$ in Figure \ref{fig:dynflow} (a) with only one positive edge $e=(1,3,+)$, we give three of its $17$ integer dynamic flows with netflow $(2,1,1)$ where we add $b_{\ell}(e)=0,1$ and $2$ right half-edges respectively.

\begin{figure}
\centering
\subfigure[]{
\raisebox{5pt}{
\begin{tikzpicture}[scale=0.45] 
\draw [-] (4,0) to (0,0);
\draw[red] [-] (0,0) arc (180:0:2cm and 1.9cm);
\draw [-] (0,0) arc (180:0:2cm and 1.3cm);
\node [style=bb,label=below:$\textcolor{red}{2}$] (2) at (0, 0) {};
\node [style=bb,label=below:$\textcolor{red}{1}$] (3) at (2, 0) {};
\node [style=bb,label=below:$\textcolor{red}{1}$] (4) at (4, 0) {};
\node [label=below:$G$] (a) at (2, 3.5) {};
\end{tikzpicture}
}
\raisebox{20pt}{$\longrightarrow$}
\raisebox{5pt}{
\begin{tikzpicture}[scale=0.45] 
\draw [-] (4,0) to (0,0);
\draw[red,dotted] [-] (0,0) arc (180:0:2cm and 1.9cm);
\draw[red] [-] (0,0) arc (180:105:2cm and 1.90cm);
\draw[red] [-] (4,0) arc (0:75:2cm and 1.90cm);
\draw [-] (0,0) arc (180:0:2cm and 1.3cm);
\node [style=bb,label=below:$\textcolor{red}{2}$] (2) at (0, 0) {};
\node [style=bb,label=below:$\textcolor{red}{1}$] (3) at (2, 0) {};
\node [style=bb,label=below:$\textcolor{red}{1}$] (4) at (4, 0) {};
\end{tikzpicture}
}
}
\qquad
\subfigure[]{
\raisebox{28pt}{
\begin{tabular}{ccc}
\begin{tikzpicture}[scale=0.45] 
\draw [-] (4,0) to (0,0);
\draw[red,dotted] [-] (0,0) arc (180:0:2cm and 1.9cm);
\draw[red] [-] (0,0) arc (180:105:2cm and 1.90cm);
\draw[red] [-] (4,0) arc (0:75:2cm and 1.90cm);
\draw [-] (0,0) arc (180:0:2cm and 1.3cm);
\node [style=bb,label=below:$\textcolor{red}{2}$] (2) at (0, 0) {};
\node [style=bb,label=below:$\textcolor{red}{1}$] (3) at (2, 0) {};
\node [style=bb,label=below:$\textcolor{red}{1}$] (4) at (4, 0) {};
\node [label=below:$\textcolor{blue}{0}$] (b) at (0, 1.9) {};
\node [label=below:$\textcolor{blue}{1}$] (c) at (2, 1.8) {};
\node [label=below:$\textcolor{blue}{1}$] (c) at (1, 0.4) {};
\node [label=below:$\textcolor{blue}{2}$] (c) at (3, 0.4) {};
\node [label=below:$\textcolor{blue}{4}$] (c) at (2.8, 2.5) {};
\end{tikzpicture}
&
\begin{tikzpicture}[scale=0.45] 
\draw [-] (4,0) to (0,0);
\draw[red,dotted] [-] (0,0) arc (180:0:2cm and 1.9cm);
\draw[red] [-] (0,0) arc (180:105:2cm and 1.90cm);
\draw[red] [-] (4,0) arc (0:75:2cm and 1.90cm);
\draw[red] [-] (4,0) arc (0:75:2cm and 2.5cm);
\draw [-] (0,0) arc (180:0:2cm and 1.3cm);
\node [style=bb,label=below:$\textcolor{red}{2}$] (2) at (0, 0) {};
\node [style=bb,label=below:$\textcolor{red}{1}$] (3) at (2, 0) {};
\node [style=bb,label=below:$\textcolor{red}{1}$] (4) at (4, 0) {};
\node [label=below:$\textcolor{blue}{1}$] (b) at (0, 1.9) {};
\node [label=below:$\textcolor{blue}{1}$] (c) at (2, 1.8) {};
\node [label=below:$\textcolor{blue}{0}$] (c) at (1, 0.4) {};
\node [label=below:$\textcolor{blue}{1}$] (c) at (3, 0.4) {};
\node [label=below:$\textcolor{blue}{2}$] (c) at (2.8, 2.5) {};
\node [label=below:$\textcolor{blue}{1}$] (c) at (2.8, 3.2) {};
\end{tikzpicture}
&
\begin{tikzpicture}[scale=0.45] 
\draw [-] (4,0) to (0,0);
\draw[red,dotted] [-] (0,0) arc (180:0:2cm and 1.9cm);
\draw[red] [-] (0,0) arc (180:105:2cm and 1.90cm);
\draw[red] [-] (4,0) arc (0:75:2cm and 1.90cm);
\draw[red] [-] (4,0) arc (0:75:2cm and 2.5cm);
\draw[red] [-] (4,0) arc (0:75:2cm and 3cm);
\draw [-] (0,0) arc (180:0:2cm and 1.3cm);
\node [style=bb,label=below:$\textcolor{red}{2}$] (2) at (0, 0) {};
\node [style=bb,label=below:$\textcolor{red}{1}$] (3) at (2, 0) {};
\node [style=bb,label=below:$\textcolor{red}{1}$] (4) at (4, 0) {};
\node [label=below:$\textcolor{blue}{2}$] (b) at (0, 1.9) {};
\node [label=below:$\textcolor{blue}{0}$] (c) at (2, 1.8) {};
\node [label=below:$\textcolor{blue}{0}$] (c) at (1, 0.4) {};
\node [label=below:$\textcolor{blue}{1}$] (c) at (3, 0.4) {};
\node [label=below:$\textcolor{blue}{1}$] (c) at (2.8, 2.5) {};
\node [label=below:$\textcolor{blue}{0}$] (c) at (2.8, 3.2) {};
\node [label=below:$\textcolor{blue}{1}$] (c) at (2.8, 3.8) {};
\end{tikzpicture}
\\
$b_{\ell}(e)=0$ & $b_{\ell}(e)=1$ & $b_{\ell}(e)=2$
\end{tabular}
}
}
\caption{Example of dynamic flow: (a) signed graph $G$ with positive edge $e$ split into two half-edges, (b) three of the $17$ dynamic integer flows where $b_{\ell}(e)=0,1$, and $2$ so that zero, one and two right positive half-edges are added respectively.}
\label{fig:dynflow}
\end{figure}
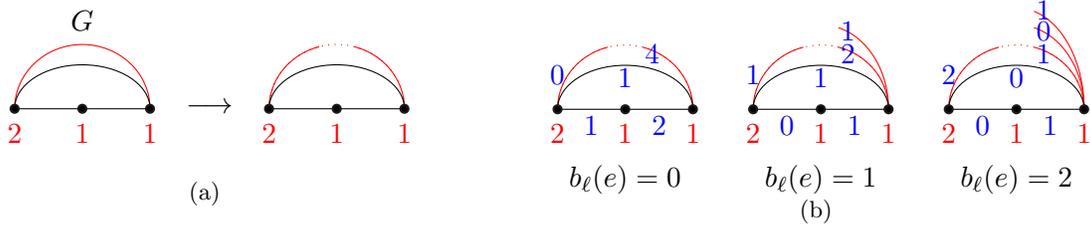

Given a signed graph $G$ on the vertex set $[n+1]$ and ${\bf a}$ a vector in $\mathbb{Z}^{n+1}$, the {\bf dynamic Kostant partition function} $K_G^{\text{dyn}}({\bf a})$ is the number of integer dynamic ${\bf a}$-flows in $G$.  

\begin{proposition} \label{prop:Kdyngs} \cite[Proposition 6.11]{mm} 
The generating series of the dynamic Kostant partition function is
\begin{equation}\label{Kdyngs}
\sum_{{\bf a} \in \mathbb{Z}^{n+1}} K_G^{\text{dyn}}({\bf a}){\bf x}^{\bf a} = {\prod_{(i,j,-) \in E(G)}} (1-x_ix_j^{-1})^{-1} {\prod_{(i,j,+) \in E(G)}} (1-x_i-x_j)^{-1},
\end{equation}
where ${\bf x}^{\bf a}=x_1^{a_1}x_2^{a_2}\cdots x_{n+1}^{a_{n+1}}$.
\end{proposition}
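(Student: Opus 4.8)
The plan is to read the left-hand side as a generating function over all dynamic integer flows on $G$ and to show it factors as a product of independent contributions, one per edge of $G$. Since $K_G^{\text{dyn}}({\bf a})$ counts the dynamic integer ${\bf a}$-flows, I would write
\[
\sum_{{\bf a}\in\Z^{n+1}} K_G^{\text{dyn}}({\bf a})\,{\bf x}^{\bf a}=\sum_{{\bf f}}{\bf x}^{{\bf a}({\bf f})},
\]
the right-hand sum ranging over all dynamic integer flows ${\bf f}$ on $G$, where ${\bf a}({\bf f})$ is the netflow realized by ${\bf f}$ through the balance equations \eqref{eqn:dynflow}.

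First I would observe that, by \eqref{eqn:dynflow}, each coordinate $a_i({\bf f})$ is a signed sum of the flow values on the (half-)edges meeting vertex $i$, so ${\bf x}^{{\bf a}({\bf f})}$ splits into a product of local monomials indexed by the edges of $G$. Because the flow values on distinct edges are chosen independently, the entire sum factors into a product of local generating functions, one per edge, and it remains to evaluate the two types of factor. For a negative edge $(i,j,-)$, whose root is $\ee_i-\ee_j$, a flow value $b\in\Z_{\geq0}$ contributes $x_i^bx_j^{-b}$, so its factor is $\sum_{b\geq0}(x_ix_j^{-1})^b=(1-x_ix_j^{-1})^{-1}$, accounting for the first product.

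The crux is the positive edge $(i,j,+)$, where the dynamic rule makes the number of right half-edges depend on the left flow. I would sum first over the left flow $b_\ell\in\Z_{\geq0}$, contributing $x_i^{b_\ell}$ at $i$; this spawns $b_\ell$ extra right half-edges at $j$, which together with the original right half-edge give $b_\ell+1$ right half-edges at $j$, each carrying an independent nonnegative flow and hence a factor $\sum_{c\geq0}x_j^c=(1-x_j)^{-1}$. The local factor is therefore
\[
\sum_{b_\ell\geq0}x_i^{b_\ell}(1-x_j)^{-(b_\ell+1)}=\frac{1}{1-x_j}\sum_{b_\ell\geq0}\left(\frac{x_i}{1-x_j}\right)^{b_\ell}=\frac{1}{1-x_j}\cdot\frac{1-x_j}{1-x_i-x_j}=\frac{1}{1-x_i-x_j},
\]
which is exactly the second product; the same computation with $i=j$ covers a loop $(i,i,+)$ and returns $(1-2x_i)^{-1}$. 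Multiplying all local factors over the edges of $G$ then yields the claimed identity.

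The main obstacle is making this bookkeeping rigorous. One must check that the extra right half-edges spawned at $j$ contribute only to $a_j$, so that the two endpoints of a positive edge genuinely decouple into the $x_i$-sum and the $(1-x_j)^{-1}$ factors, and that resumming over the variable number $b_\ell+1$ of right half-edges is a legitimate identity of formal (Laurent) power series. As a sanity check, the coefficient of $x_i^ax_j^b$ in $(1-x_i-x_j)^{-1}$ is $\binom{a+b}{a}$, which indeed counts the dynamic flows on a single positive edge with netflow $(a,b)$: the left flow is forced to be $a$, and the $a+1$ right half-edges must carry nonnegative flows summing to $b$.
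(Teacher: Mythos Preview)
Your argument is correct and is precisely the natural edge-by-edge factorization one would expect: trade the sum over netflow vectors for a sum over dynamic flows, then observe that the monomial ${\bf x}^{{\bf a}({\bf f})}$ splits multiplicatively over edges, with each negative edge contributing a geometric series in $x_ix_j^{-1}$ and each positive edge contributing the nested geometric sum you computed. The independence you worry about in your last paragraph is genuine, since by definition the extra right half-edges spawned by a given positive edge $(i,j,+)$ are attached to $j$ and are bookkept with that edge; your sanity check with $\binom{a+b}{a}$ confirms the positive-edge factor.

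Note, however, that the paper does not supply its own proof of this proposition: it is quoted verbatim from \cite[Proposition~6.11]{mm} and used as a black box. So there is no in-paper argument to compare yours against; what you have written is essentially the proof one finds in \cite{mm}, and nothing more is needed.
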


\begin{theorem} \cite[Theorem 6.9]{mm} \label{volD}
Given a loopless connected signed graph $G$ on the vertex set $[n+1]$, let $d_i=\indeg_G(i)-1$ for $i\in \{2,\ldots,n\}$. The normalized volume $\vol(\F_G)$ of the flow polytope associated to the graph $G$ is   
\[
\vol(\F_G(2,0,\ldots,0)) = K_G^{\text{dyn}}(0,d_2, \ldots, d_n,d_{n+1}).
\]  
\end{theorem}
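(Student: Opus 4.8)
The plan is to realize the normalized volume as a count of maximal cells in a triangulation of $\F_G(2,0,\ldots,0)$ and then to match these cells with dynamic integer flows. By \eqref{ehr} the Ehrhart polynomial of $\F_G(2,0,\ldots,0)$ is $s\mapsto K_G(2s,0,\ldots,0)$, so $\vol(\F_G(2,0,\ldots,0))$ equals $d!$ times its leading coefficient, where $d=\dim\F_G(2,0,\ldots,0)=N-(n+1)$ with $N=\#E(G)$. Here the rank of $M_G$ is $n+1$, not $n$ as in type $A$: since the netflow $(2,0,\ldots,0)$ has nonzero coordinate sum, a flow can only exist if positive edges (leaks) are present, and the type $C_{n+1}$ roots then span $\R^{n+1}$. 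Rather than extract the leading coefficient analytically, I would build a triangulation of the polytope into simplices each of normalized volume one, so that the volume literally counts the top-dimensional cells.

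First I would construct the triangulation using the type $C$ analogue of the Postnikov--Stanley / Danilov--Karzanov--Koshevoy framework: fix a linear order (a framing) on the edges incident to each vertex and take the induced subdivision. The key point is that the dynamic-flow rule of \eqref{eqn:dynflow}, in which placing $b_\ell(e)$ units on the left half of a positive edge $e=(i,j,+)$ spawns $b_\ell(e)$ extra right half-edges at $j$, is precisely the local move recording which cell of the subdivision a flow occupies; negative edges are treated as in type $A$. The central combinatorial claim is then a bijection between the maximal simplices of the triangulation and the integer dynamic $(0,d_2,\ldots,d_n,d_{n+1})$-flows on $G$, where $d_i=\indeg_G(i)-1$. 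Granting the bijection, the number of top cells is $K_G^{\mathrm{dyn}}(0,d_2,\ldots,d_n,d_{n+1})$, a well-defined finite quantity by Proposition~\ref{prop:Kdyngs} and the expansion of \eqref{Kdyngs}, and the theorem follows.

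I would justify the indegree shift vertex by vertex, since this is the arithmetic heart of the statement. At each vertex $i$ the framing singles out one distinguished incoming (negative) edge as a reference, while in a given cell each of the remaining $\indeg_G(i)-1$ incoming edges must carry a strictly positive amount; subtracting one unit from every such edge converts the cell-indexing datum into a genuine dynamic flow whose net demand at $i$ is exactly $d_i=\indeg_G(i)-1$. Vertex $1$ carries the netflow $2$ and has no incoming edges, which is why its coordinate is $0$, and the asymmetric role of the sink accounts for the coordinate $d_{n+1}$; the half-edge splitting of positive edges, weighted by the binomials in $(1-x_i-x_j)^{-1}$, is what makes the passage of positive (leaking) flow through each vertex compatible with this bookkeeping.

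The hard part will be establishing the triangulation in the signed setting and proving that every maximal cell has normalized volume one. Positive edges behave as leaks -- by \eqref{eq:flowconstraint} they absorb a total flow of exactly $1$ -- and so do not fit the type $A$ picture directly; the half-edge device is exactly what restores a type $A$-like local structure, but one must still verify that the resulting cells tile $\F_G(2,0,\ldots,0)$ without overlap and are each unimodular with respect to the ambient lattice. A secondary difficulty is pinning down the exact indegree accounting at every vertex, and in particular fixing the correct convention at the sink $n+1$. As a reserve, should a clean triangulation prove elusive, I would instead extract the leading Ehrhart coefficient of $K_G(2s,0,\ldots,0)$ directly by an iterated-residue (constant-term) computation applied to the dynamic generating function \eqref{Kdyngs}; this is also the route suggested by the constant-term machinery developed later in the paper, and it would reduce the claim to an identity between the leading volume coefficient on the ordinary side and the single dynamic Kostant evaluation on the right.
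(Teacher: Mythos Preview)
This theorem is not proved in the paper at all: it is quoted verbatim as \cite[Theorem 6.9]{mm} and used as a black box (see Section~\ref{sec:flowsdefs} and the remark immediately following it that the result can fail for graphs with loops). There is therefore no in-paper proof to compare your proposal against.

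For context, the argument in \cite{mm} does go through a triangulation, but it is obtained by iterating the reduction rules (R1)--(R6) recalled in Section~\ref{sec:volumes} rather than by fixing a framing \`a la Postnikov--Stanley/Danilov--Karzanov--Koshevoy; the maximal cells are then matched with dynamic integer flows, which is the bijection you allude to. Your sketch is in the right spirit, but as written it is a plan rather than a proof: the two substantive claims --- that the cells are unimodular simplices and that they are in bijection with dynamic $(0,d_2,\ldots,d_{n+1})$-flows --- are asserted, not established, and the ``indegree shift'' paragraph is heuristic (in particular, the story about a distinguished incoming edge and subtracting one unit from the others does not by itself produce the dynamic half-edge mechanism on the positive side). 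Your reserve route via the leading Ehrhart coefficient and \eqref{Kdyngs} would require an independent identity relating $K_G(2s,0,\ldots,0)$ to a single evaluation of $K_G^{\mathrm{dyn}}$, which is essentially the content of the theorem itself, so it does not give an alternative proof without further work.
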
  
Theorem \ref{volD} is in general false for graphs with loops, as it already fails for  
\[
G=([3], \{(1,2,-),(1,2,-),(1,2,-),(2,2,+)\}). 
\]
This theorem does not apply to $CRYC_{n+1}=\mathcal{F}_{K_{n+1}^C}(2,0,\ldots,0)$ as $K_{n+1}^C$ has loops.
Nevertheless, we will show in the next Section that the analogue of Theorem \ref{volD} holds for $CRYC_{n+1}$.
Our proof is specific to $CRYC_{n+1}$ and (obviously) cannot be extended to general graphs with loops.

\section{Volumes of  $CRYC_{n+1}$ and $CRYD_{n+1}$ via constant term identities}
\label{sec:volumes}

Suppose that a multi-variable function $f(x_1,\dots,x_n)$ is a Laurent series in $x_i$ considering other variables as constants. Then we denote by $CT_{x_i}f(x_1,\dots,x_n)$ the constant term in the Laurent expansion.   Based on Theorem \ref{volD} it is proved in \cite{mm} that:

\begin{proposition} \cite[Proposition 7.5]{mm} The normalized volume of $CRY D_{n+1}$ is
\begin{equation} \label{eqCRYDMorrislike}
\vol(CRY D_{n+1}) = \CT_{x_{n-1}}\cdots \CT_{x_1} \prod_{i=1}^{n-1}x_i^{-1} (1-x_i)^{-2}  \prod_{{1\leq i<j \leq n-1}} (x_j-x_i)^{-1}(1-x_j-x_i)^{-1} .
\end{equation}
\end{proposition}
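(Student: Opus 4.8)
The plan is to combine Theorem~\ref{volD} with the generating-series formula of Proposition~\ref{prop:Kdyngs} and then eliminate two variables by elementary constant-term extractions.

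First I would apply Theorem~\ref{volD} to $G = K_{n+1}^D$, which is loopless and connected. Its incoming edges at vertex $i$ are exactly the negative edges $(g,i,-)$ with $g<i$, so $\indeg_{K_{n+1}^D}(i)=i-1$ and $d_i=\indeg(i)-1=i-2$ for $2\le i\le n+1$. In particular $d_2=0$, and the source value is $0$ as well. Hence
\[
\vol(CRYD_{n+1}) = K^{\text{dyn}}_{K_{n+1}^D}(0,0,1,2,\dots,n-1).
\]
By Proposition~\ref{prop:Kdyngs} this coefficient is an iterated constant term,
\[
\vol(CRYD_{n+1}) = \CT_{x_1,\dots,x_{n+1}} \Big(\prod_{j=2}^{n+1} x_j^{-(j-2)}\Big) \prod_{1\le i<j\le n+1} (1-x_i/x_j)^{-1}(1-x_i-x_j)^{-1},
\]
where each factor is expanded in the region $x_1\ll x_2\ll\cdots\ll x_{n+1}$, all small.

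Next I would extract the $x_1^0$- and $x_2^0$-coefficients, in that order. Because every factor has nonnegative $x_1$-degree, the $x_1^0$-coefficient is the specialization $x_1=0$: the negative edges $(1,j,-)$ become $1$ while the positive edges $(1,j,+)$ become $(1-x_j)^{-1}$. After this specialization the variable $x_2$ (whose negative powers came only from $(1-x_1/x_2)^{-1}$, now $1$) has nonnegative degree throughout, and since $d_2=0$ the $x_2^0$-coefficient is likewise the specialization $x_2=0$. The combined effect is to delete vertices $1$ and $2$, kill all negative edges incident to them, and leave for each surviving vertex $j\in\{3,\dots,n+1\}$ the two positive factors $(1-x_1-x_j)^{-1}\big|_{x_1=0}$ and $(1-x_2-x_j)^{-1}\big|_{x_2=0}$, i.e.\ a factor $(1-x_j)^{-2}$. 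What remains is the type-$D$ complete graph on $\{3,\dots,n+1\}$:
\[
\CT_{x_3,\dots,x_{n+1}} \Big(\prod_{j=3}^{n+1} x_j^{-(j-2)}(1-x_j)^{-2}\Big) \prod_{3\le i<j\le n+1} (1-x_i/x_j)^{-1}(1-x_i-x_j)^{-1}.
\]
Clearing the monomial bookkeeping, I would write each negative factor as $(1-x_i/x_j)^{-1}=x_j(x_j-x_i)^{-1}$, which collects a prefactor $x_j^{\,j-3}$ at vertex $j$ (one for each $i$ with $3\le i<j$); combined with $x_j^{-(j-2)}$ this leaves exactly $x_j^{-1}$. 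Relabelling $x_{k+2}\mapsto x_k$ for $1\le k\le n-1$ then produces
\[
\CT_{x_{n-1}}\cdots\CT_{x_1} \prod_{i=1}^{n-1} x_i^{-1}(1-x_i)^{-2} \prod_{1\le i<j\le n-1} (x_j-x_i)^{-1}(1-x_j-x_i)^{-1},
\]
which is the claimed identity.

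The main point to get right is that the $x_1^0$- and $x_2^0$-coefficients may be realized as the substitutions $x_1=0$ and then $x_2=0$, and that the order is forced. In the region $x_1\ll\cdots\ll x_{n+1}$ the factor $(1-x_1/x_2)^{-1}$ contributes negative powers of $x_2$, so before eliminating $x_1$ the series is genuinely Laurent in $x_2$; only after setting $x_1=0$, which kills that factor, do all exponents of $x_2$ become nonnegative, so that extracting the $x_2^0$-coefficient coincides with the specialization $x_2=0$. Since all exponents of $x_1$ are already nonnegative, the $x_1^0$-coefficient is the $x_1=0$ specialization outright. One should therefore check that these two extractions may be carried out first and in this order (they simply isolate a fixed coefficient of a fixed formal Laurent series); the remaining steps are routine exponent arithmetic.
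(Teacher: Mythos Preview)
Your proof is correct and follows essentially the same route the paper indicates (and carries out in detail for the type~$C$ analogue in Lemma~\ref{2}): apply Theorem~\ref{volD} to $K_{n+1}^D$ to get $\vol(CRYD_{n+1})=K_{K_{n+1}^D}^{\text{dyn}}(0,0,1,\dots,n-1)$, read this off from the generating series of Proposition~\ref{prop:Kdyngs}, specialize $x_1=x_2=0$, and relabel. Your careful justification of why the $x_1^0$- and $x_2^0$-extractions coincide with the substitutions $x_1=0$ and then $x_2=0$ (in that order) is exactly the point the paper glosses with ``by plugging in $x_1=x_2=0$.''
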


Using the above Conjecture \ref{conjcryD} can be rewritten as a constant term identity, and this is how Zeilberger \cite{Z1} proved it; we expand on his proof  in the next section. 
This section is devoted to proving a similar constant term identity for $CRYC_{n+1}$. We know that Theorem \ref{volD} does not apply to this case.
We now show that the analogue of Theorem \ref{volD} holds for $CRYC_{n+1}$. 

\begin{theorem} \label{volC} The normalized volume of $CRY C_{n+1}$ is 

\begin{equation*}  
\vol(CRYC_{n+1}) = CT_{x_{n-1}}CT_{x_{n-2}} \cdots CT_{x_1} \prod_{i=1}^{n-1}x_i^{-1} (1-x_i)^{-2}(1-2x_i)^{-1}  \prod_{{1\leq i<j \leq n-1}} (x_j-x_i)^{-1}(1-x_j-x_i)^{-1}.
\end{equation*}
\end{theorem}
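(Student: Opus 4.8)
The plan is to establish the constant term formula for $\vol(CRYC_{n+1})$ by mirroring the route that produced the analogous identity \eqref{eqCRYDMorrislike} for $CRYD_{n+1}$, but carefully accounting for the loops $(i,i,+)$ present in $K^C_{n+1}$ that prevent a direct appeal to Theorem \ref{volD}. First I would reduce the volume of $CRYC_{n+1}$ to a dynamic Kostant partition function evaluation, proving the $CRYC$-analogue of Theorem \ref{volD} directly. The key structural input is that $CRYC_{n+1}=\mathcal{F}_{K^C_{n+1}}(2,0,\dots,0)$ and that the netflow satisfies the parity constraint \eqref{eq:flowconstraint} with $y=1$, so exactly one unit of flow escapes through the positive edges and loops. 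I would triangulate the polytope via the framework of dynamic integer flows described in Section \ref{sec:flowsdefs}, whose simplices are indexed by dynamic flows, so that the normalized volume equals a count of dynamic flows with a specific netflow vector built from the indegrees $d_i=\indeg(i)-1$. The loops at each vertex contribute to the combinatorics of the half-edge bookkeeping, and I would verify that, despite the failure of Theorem \ref{volD} in general, the specific loop structure of $K^C_{n+1}$ (a single $+$-loop at each vertex, together with the fact that the source has all the netflow) makes the volume equal to the appropriate dynamic Kostant partition function value.

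Next I would convert that dynamic Kostant partition function evaluation into a constant term by applying the generating-series identity of Proposition \ref{prop:Kdyngs}. The generating series factors over edges: each negative edge $(i,j,-)$ gives a factor $(1-x_ix_j^{-1})^{-1}$, each positive edge $(i,j,+)$ gives $(1-x_i-x_j)^{-1}$, and each loop $(i,i,+)$ gives $(1-2x_i)^{-1}$. Extracting the coefficient of the monomial ${\bf x}^{\bf a}$ corresponding to the computed netflow vector $(0,d_2,\dots,d_{n+1})$ amounts to taking iterated constant terms after multiplying by suitable monomials $x_i^{-d_i}$. Tracking the indegrees in $K^C_{n+1}$, one finds $d_i = i-1$ for the relevant vertices, and the loop factors are exactly what produce the extra $(1-2x_i)^{-1}$ terms in the claimed formula relative to the type $D$ identity \eqref{eqCRYDMorrislike}. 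After this bookkeeping, the product over negative edges yields the $\prod_{i<j}(x_j-x_i)^{-1}$ Vandermonde-type factors, the product over positive non-loop edges yields the $\prod_{i<j}(1-x_j-x_i)^{-1}$ factors and the $(1-x_i)^{-2}$ factors, and the loops yield the $(1-2x_i)^{-1}$ factors, matching the statement exactly after the variables used to track the sink/source are eliminated.

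The main obstacle I anticipate is the first step: justifying the $CRYC$-analogue of Theorem \ref{volD} despite the explicit warning that the theorem is false for graphs with loops, as witnessed by the counterexample $G=([3],\{(1,2,-)^{\times 3},(2,2,+)\})$. The difficulty is that loops can create dynamic flows that do not correspond to genuine simplices of a triangulation, so the equality between normalized volume and the dynamic Kostant partition function can break. I would therefore need a proof specific to $K^C_{n+1}$, exploiting that its loops sit at vertices where the incoming flow structure is rigidly controlled by the single source of netflow $2$ and the parity constraint $\sum a_i = 2$. Concretely, I expect to argue that in $CRYC_{n+1}$ the positive edges and loops together carry exactly one unit of total flow, which so constrains the combinatorics that the offending configurations from the counterexample cannot arise; the half-edge subdivision then produces a bona fide triangulation whose maximal simplices biject with the dynamic flows at netflow $(0,d_2,\dots,d_{n+1})$. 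Once this geometric fact is secured, the remaining constant-term manipulation via Proposition \ref{prop:Kdyngs} is routine bookkeeping of indegrees and edge factors, and the analytic evaluation of the resulting constant term is deferred to the next section.
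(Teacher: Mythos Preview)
Your overall strategy---reduce $\vol(CRYC_{n+1})$ to a dynamic Kostant partition function evaluation on $K_{n+1}^C$, then extract the constant term via Proposition~\ref{prop:Kdyngs}---matches the paper's endpoint, but the mechanism you propose for the first step has a genuine gap. You argue that the parity constraint $\sum_{e\text{ positive}} b(e)=1$ (coming from $\sum a_i=2$) is what rules out the pathological configurations that make Theorem~\ref{volD} fail for graphs with loops. But the paper's own counterexample $G=([3],\{(1,2,-),(1,2,-),(1,2,-),(2,2,+)\})$ with netflow $(2,0,0)$ \emph{also} satisfies $\sum a_i=2$, so the loop there carries exactly one unit of total positive flow as well, yet Theorem~\ref{volD} still fails for it. Hence the ``single unit of positive flow'' observation cannot by itself justify extending the dynamic-flow triangulation to $K_{n+1}^C$; you would need a genuinely different structural reason, and the proposal does not supply one.

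The paper does not attempt to salvage the triangulation directly. Instead it detours: it applies the reduction rule (R6) in a prescribed order to $K_{n+1}^C$ to eliminate the loops at vertices $2,\dots,n+1$, obtaining an explicit finite family $\mathcal{G}$ of \emph{loopless} signed graphs with $\vol(CRYC_{n+1})=\sum_{G\in\mathcal{G}}\vol(\mathcal{F}_G(2,0,\dots,0))$ (Theorem~\ref{1}). Theorem~\ref{volD} now applies legitimately to each $G\in\mathcal{G}$. The substantive work is then a hand-built bijection (Theorem~\ref{3}) showing that $\sum_{G\in\mathcal{G}} K_G^{\mathrm{dyn}}(0,\indeg_G(2)-1,\dots,\indeg_G(n+1)-1)=K_{K_{n+1}^C}^{\mathrm{dyn}}(0,0,1,\dots,n-1)$; this bijection, not any geometric argument about triangulations surviving loops, is where the specific structure of $K_{n+1}^C$ enters. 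Your second step (the constant-term extraction via Proposition~\ref{prop:Kdyngs}) is correct and matches Lemma~\ref{2}. One minor slip: $\indeg_{K_{n+1}^C}(i)=i-1$, so $d_i=\indeg(i)-1=i-2$, giving the netflow vector $(0,0,1,\dots,n-1)$, not what $d_i=i-1$ would yield.
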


Note, Theorem \ref{volC} differs only in the presence of $\prod_{i=1}^{n-1}(1-2x_i)^{-1}$ from type $D$ as in equation \eqref{eqCRYDMorrislike} 
and Conjecture \ref{conjcryC} states that $\vol(CRYC_{n+1}) =2^{n-1} \vol(CRYD_{n+1}) $.

\medskip

The rest of this section is devoted to the proof of Theorem \ref{volC}. Our proof uses some of the ideas of the proof of Theorem \ref{volD} together with new 
considerations, so we now  review more  background following \cite{mm}.

\subsection{Reduction rules for signed graphs} In this subsection we explain how to  recursively  compute the volume of the 
flow polytope $\F_G(\aa)$ following \cite[Section 4]{mm}. Figure \ref{fig:subdivrules} is  borrowed from \cite{mm} with permission. 
   
   Given   a graph $G$ on the vertex set $[n+1]$ and   $(a, i, -), (i, b, -) \in E(G)$ for some $a<i<b$, let   $G_1$ and $G_2$ be graphs on the vertex set $[n+1]$ with edge sets
  \begin{align*} 
E(G_1)&=E(G)\backslash \{(a, i,-)\} \cup \{(a, b, -)\},  \\
E(G_2)&=E(G)\backslash \{(i, b,-)\} \cup \{(a, b,-)\}. \tag{R1} 
\end{align*}
    Given   a graph $G$ on the vertex set $[n+1]$ and   $(a, i, -), (i, b, +) \in E(G)$ for some $a<i<b$, let   $G_1$ and $G_2$ be graphs on the vertex set $[n+1]$ with edge sets
  \begin{align*} 
E(G_1)&=E(G)\backslash \{(a, i,-)\} \cup \{(a, b, +)\},  \\
E(G_2)&=E(G)\backslash \{(i, b,+)\} \cup \{(a, b, +)\}. \tag{R2} 
\end{align*}

   Given   a graph $G$ on the vertex set $[n+1]$ and   $(a, i, -), (b, i, +) \in E(G)$ for some $a<b<i$, let   $G_1$ and $G_2$ be graphs on the vertex set $[n+1]$ with edge sets
  \begin{align*} 
E(G_1)&=E(G)\backslash \{(a, i, -)\} \cup \{(a, b, +)\},  \\
E(G_2)&=E(G)\backslash \{(b, i, +)\} \cup \{(a, b, +)\}. \tag{R3}
\end{align*}
   Given   a graph $G$ on the vertex set $[n+1]$ and   $(a, i, +), (b, i, -) \in E(G)$ for some $a<b<i$, let   $G_1$ and $G_2$ be graphs on the vertex set $[n+1]$ with edge sets
  \begin{align*} 
E(G_1)&=E(G)\backslash \{(a, i, +)\} \cup \{(a, b, +)\},  \\
E(G_2)&=E(G)\backslash \{(b, i, -)\} \cup \{(a, b, +)\}. \tag{R4}
\end{align*}

 Given   a graph $G$ on the vertex set $[n+1]$ and   $(a, i, -), (a, i, +) \in E(G)$ for some $a<i $, let   $G_1$ and $G_2$ be graphs on the vertex set $[n+1]$ with edge sets
  \begin{align*} 
E(G_1)&=E(G)\backslash \{(a, i, +)\} \cup \{(a, a, +)\},  \\
E(G_2)&=E(G)\backslash \{(a, i, -)\} \cup \{(a, a, +)\}.\tag{R5} 
\end{align*}

Given   a graph $G$ on the vertex set $[n+1]$ and   $(a, i, -), (i, i, +) \in E(G)$ for some $a<i $, let   $G_1$ and $G_2$ be graphs on the vertex set $[n+1]$ with edge sets
  \begin{align*} 
E(G_1)&=E(G)\backslash \{(a, i, -)\} \cup \{(a, i, +)\},  \\
E(G_2)&=E(G)\backslash \{(i, i, +)\} \cup \{(a, i, +)\}\tag{R6} .\end{align*}

    We say that $G$ {\em reduces} to $G_1$ and $G_2$ under the reduction rules (R1)-(R6). We also say in the above cases that we are \textit{reducing at vertex} $i$.  Figure \ref{fig:subdivrules} shows these reduction rules graphically and explains the basic idea that implies that we can use these reductions to dissect flow polytopes. In this paper we will only be using special cases of the results in \cite{mm}, and we state these special cases next.

   \begin{lemma} \label{red} Let $G$ be a signed graph   on the vertex set $[n+1]$ and let  $e_1$ and $e_2$  be two edges of $G$ on which one of the reductions (R1)-(R6) can be performed yielding  graphs  $G_1$ and $G_2$. If  the dimensions of $\F_G(2, 0, \ldots, 0), \F_{G_1}(2, 0, \ldots, 0)$ and $\F_{G_2}(2, 0, \ldots, 0)$ are the same, then 
    $$\vol \F_G(2, 0, \ldots, 0)=\vol \F_{G_1}(2, 0, \ldots, 0) +\vol \F_{G_2}(2, 0, \ldots, 0).$$
If on the other hand only one of $\F_{G_1}(2, 0, \ldots, 0)$ and $\F_{G_2}(2, 0, \ldots, 0)$ is of dimension $\dim \F_G(2, 0, \ldots, 0)$, and the other one has strictly lower dimension, we obtain
 
 $$\vol \F_G(2, 0, \ldots, 0)=\vol \F_{G_i}(2, 0, \ldots, 0),$$ where  $\F_{G_i}(2, 0, \ldots, 0)$ is of dimension $\dim \F_G(2, 0, \ldots, 0)$ (for $i=1$ or $i=2$). 
 
 Moreover, the above are all the possible cases.
\end{lemma}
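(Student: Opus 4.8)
The plan is to realize the six rules (R1)–(R6) as a single geometric subdivision of $\F_G(2,0,\ldots,0)$ and then read the volume statement off from it, uniformly across all cases. The common feature I would record first is that in every one of (R1)–(R6) the two edges $e_1,e_2$ carry roots $\vvv_1,\vvv_2$ whose sum $\vvv_3:=\vvv_1+\vvv_2$ is exactly the root of the new edge $e_3$ created in both $G_1$ and $G_2$; a one-line check settles each rule (e.g.\ in (R1) $(\ee_a-\ee_i)+(\ee_i-\ee_b)=\ee_a-\ee_b$, in (R5) $(\ee_a-\ee_i)+(\ee_a+\ee_i)=2\ee_a$, and in (R6) $(\ee_a-\ee_i)+2\ee_i=\ee_a+\ee_i$). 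Writing $G_1$ as ``replace $e_1$ by $e_3$, keep $e_2$'' and $G_2$ as ``replace $e_2$ by $e_3$, keep $e_1$'', this single relation $\vvv_3=\vvv_1+\vvv_2$ is all the subdivision needs.

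Next I would write down the two maps exhibiting the subdivision. Send a flow on $G_1$ with value $s$ on $e_3$ and $t$ on the retained edge $e_2$ to the flow on $G$ that agrees with it off $\{e_1,e_2\}$ and has $b(e_1)=s$, $b(e_2)=s+t$; define $\phi_2$ symmetrically with $b(e_1)=s+t$, $b(e_2)=s$. Using $\vvv_3=\vvv_1+\vvv_2$ one checks $M_G\phi_i=\aa$, so each $\phi_i$ lands in $\F_G(2,0,\ldots,0)$, and it is the restriction of a unimodular linear map of $\R^N$ (on the two affected coordinates it is $(s,t)\mapsto(s,s+t)$, of determinant one), hence an integral equivalence onto its image. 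The images are exactly $\phi_1(\F_{G_1})=\{f\in\F_G:b(e_2)\ge b(e_1)\}$ and $\phi_2(\F_{G_2})=\{f\in\F_G:b(e_1)\ge b(e_2)\}$, which cover $\F_G(2,0,\ldots,0)$ and meet in $H:=\{b(e_1)=b(e_2)\}\cap\F_G(2,0,\ldots,0)$. This is precisely the special case of the subdivision results of \cite{mm} underlying Figure \ref{fig:subdivrules}, which I would invoke here.

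With the subdivision in hand the bookkeeping is short, using that integral equivalence preserves normalized volume and that $\dim\phi_i(\F_{G_i})=\dim\F_{G_i}\le d:=\dim\F_G(2,0,\ldots,0)$. If both $\F_{G_1}$ and $\F_{G_2}$ have dimension $d$, then the two images are full-dimensional pieces meeting along the lower-dimensional set $H$, so their normalized $d$-volumes add to that of $\F_G$, giving $\vol\F_G=\vol\F_{G_1}+\vol\F_{G_2}$. If instead one image, say $\phi_2(\F_{G_2})$, is full-dimensional while $\phi_1(\F_{G_1})$ has dimension $<d$, then $\phi_1(\F_{G_1})\subseteq H$ contributes zero $d$-volume and $\phi_2(\F_{G_2})$ equals $\F_G$ up to a set of dimension $<d$, yielding $\vol\F_G=\vol\F_{G_2}$. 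Since the two images always cover $\F_G$, they cannot both be lower-dimensional, so at least one is full-dimensional and these two scenarios are exhaustive; this is the final assertion.

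The step I expect to be the main obstacle is making the dimension trichotomy airtight, namely certifying that the subdivision is \emph{proper} — that the overlap $H$ is genuinely of dimension $<d$ in the case where both pieces are full-dimensional. This amounts to excluding $\F_G(2,0,\ldots,0)\subseteq\{b(e_1)=b(e_2)\}$, i.e.\ to ruling out that $b(e_1)-b(e_2)$ is constant on the whole polytope; this is exactly where the dimension hypotheses of the lemma, together with the structural and connectivity properties of the reductions in \cite{mm}, must be used, rather than the soft covering-and-additivity argument of the previous paragraph.
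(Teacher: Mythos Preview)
Your subdivision argument is exactly the one underlying the cited results of \cite{mm}; the paper itself gives no proof of Lemma~\ref{red} but simply records it as a special case of those results, so there is nothing further to compare on the level of approach. The maps $\phi_1,\phi_2$ you write down, the unimodularity check, and the covering $\F_G=\phi_1(\F_{G_1})\cup\phi_2(\F_{G_2})$ with overlap $H=\F_G\cap\{b(e_1)=b(e_2)\}$ are all correct and standard.

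The obstacle you flag in the final paragraph, however, is not a technicality to be dispatched by ``the dimension hypotheses of the lemma'': it is a genuine gap, and in fact the lemma as literally stated is false. Take $G$ on $\{1,2\}$ with edges $e_1=(1,2,-)$, $e_2=(1,2,+)$ and a loop $e_4=(1,1,+)$, and apply (R5). The netflow constraint at vertex~$2$ forces $b(e_1)=b(e_2)$ on all of $\F_G(2,0)$, so $\F_G(2,0)$ is the unit segment $\{(t,t,1-t):0\le t\le 1\}$, and both $\F_{G_1}(2,0)$ and $\F_{G_2}(2,0)$ are unit segments as well; all three have dimension~$1$ and normalized volume~$1$, yet $1\ne 1+1$. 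Here $H=\F_G$ is full-dimensional, exactly the degeneracy you worried about, and no amount of appealing to ``the dimensions are the same'' rules it out.

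What actually happens in \cite{mm} is that the \emph{subdivision} $\F_G=\phi_1(\F_{G_1})\cup\phi_2(\F_{G_2})$ is the primary result, and additivity of volumes is a corollary valid only when the overlap $H$ has dimension $<d$, equivalently when $b(e_1)-b(e_2)$ is not constant on $\F_G$. The paper's formulation in Lemma~\ref{red} is a mild over-simplification of this; it is harmless in the only place it is invoked (the proof of Theorem~\ref{1}), because in $K_{n+1}^C$ and its descendants the reduced vertex always carries additional incident edges, so the netflow constraint there never collapses to $b(e_1)=b(e_2)$. If you want an airtight statement, replace the equal-dimension hypothesis by the hypothesis that $\F_G\not\subseteq\{b(e_1)=b(e_2)\}$, and then your argument goes through verbatim.
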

 
\begin{figure}
\centering
\includegraphics[height=8cm]{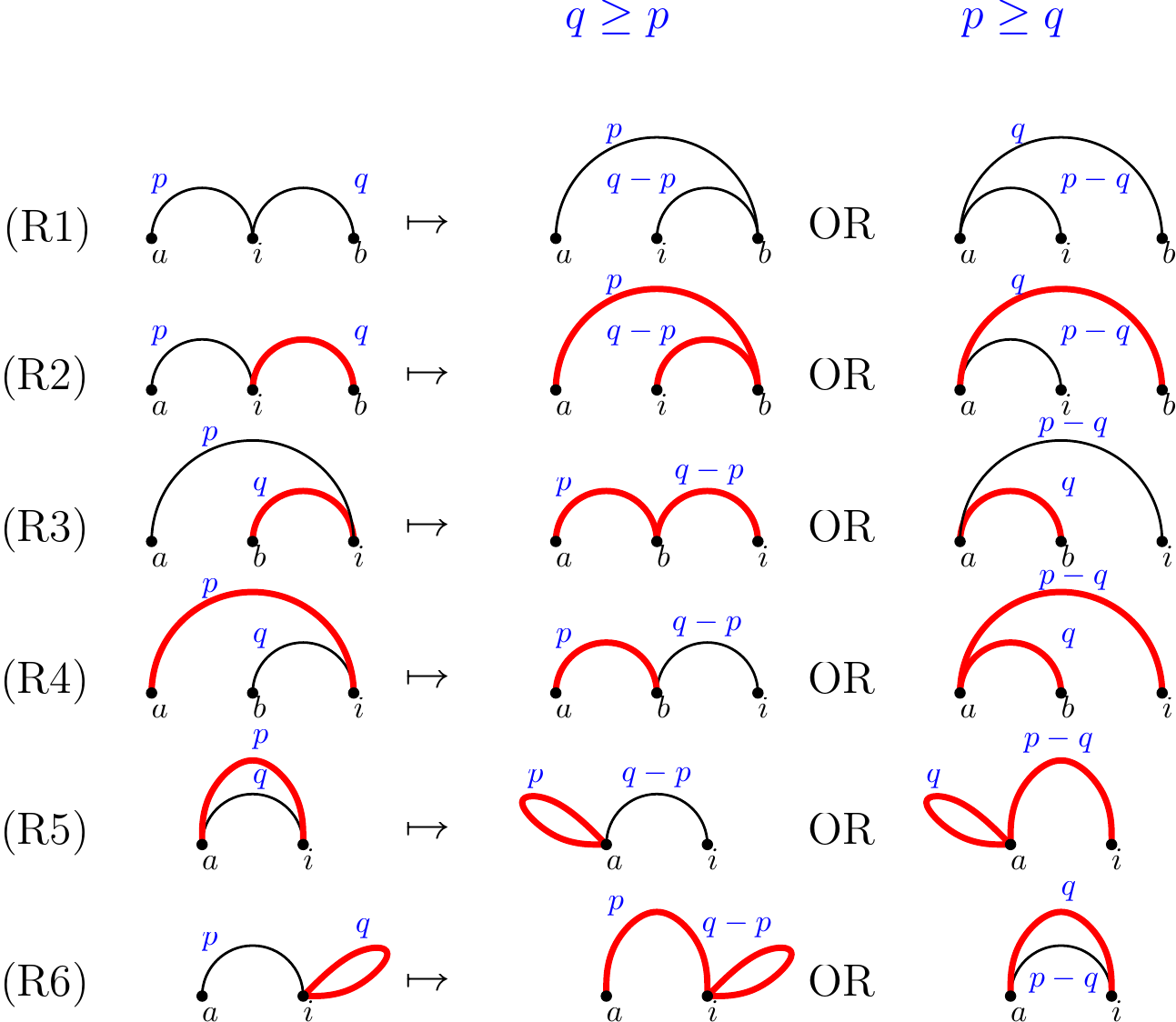}
\caption{Reduction rules from equations (R1)-(R6). The original edges have flow $p$ and $q$. The outcomes have reassigned flows to preserve the original netflow on the vertices.}
 \label{fig:subdivrules}
\end{figure}

Another lemma that follows  from considerations in \cite{mm} is: 

\begin{lemma} \label{loops}  Given a signed graph $G=([n+1], E)$ with loops only at the vertex $1$, let $L\subset E$ be the multiset of its loops. Denote by $G^1=([n+1], E\backslash L)$ the graph obtained from $G$ with its loops at vertex $1$ removed. Then,  
 $$\vol \F_G(2, 0, \ldots, 0)=\vol \F_{G^1}(2, 0, \ldots, 0).$$
\end{lemma}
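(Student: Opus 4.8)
The plan is to pass from volumes to lattice-point counts via the Ehrhart relation \eqref{ehr}, exploiting the fact that a loop $(1,1,+)$ contributes the root $\vv(1,1,+)=2\ee_1$, which affects only the first coordinate of the flow equations. Write $k=|L|$ for the number of loops at vertex $1$, counted with multiplicity, so the edge multiset of $G$ is that of $G^1$ together with $k$ copies of $(1,1,+)$. Since the netflow is $(2,0,\dots,0)$, \eqref{ehr} gives $i(\F_G,t)=K_G(2t,0,\dots,0)$ and similarly for $G^1$, so it suffices to understand how the loops alter this count.

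First I would set up a bijective decomposition of integer flows. Given an integer flow on $G$ with netflow $(2t,0,\dots,0)$, record the loop multiplicities $\ell_1,\dots,\ell_k\in\Z_{\ge 0}$ and put $s=\sum_i\ell_i$. Because each loop root is $2\ee_1$ and contributes nothing in coordinates $2,\dots,n+1$, deleting the loops leaves an integer flow on $G^1$ with netflow $(2t-2s,0,\dots,0)=(2(t-s),0,\dots,0)$; conversely any such $G^1$-flow together with any loop vector summing to $s$ reassembles a flow on $G$. Since the number of loop vectors with $\sum_i\ell_i=s$ is $\binom{s+k-1}{k-1}$, this yields the recursion
\[
i(\F_G,t)=\sum_{s=0}^{t}\binom{s+k-1}{k-1}\,i(\F_{G^1},t-s).
\]

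The final step I would carry out is to extract leading coefficients. Write $d_1=\dim\F_{G^1}$; then $i(\F_{G^1},u)$ is a (quasi-)polynomial of degree $d_1$ whose constant leading coefficient is the relative volume $\vol\F_{G^1}/d_1!$, while $\binom{s+k-1}{k-1}$ is a degree-$(k-1)$ polynomial in $s$ with leading coefficient $1/(k-1)!$. The discrete Beta-integral estimate $\sum_{s=0}^{t}s^{k-1}(t-s)^{d_1}\sim\frac{(k-1)!\,d_1!}{(d_1+k)!}t^{d_1+k}$ shows the right-hand side is a (quasi-)polynomial of degree $d_1+k$, so that $\dim\F_G=d_1+k$ as a by-product, with leading coefficient $\frac{1}{(k-1)!}\cdot\frac{\vol\F_{G^1}}{d_1!}\cdot\frac{(k-1)!\,d_1!}{(d_1+k)!}=\frac{\vol\F_{G^1}}{(d_1+k)!}$. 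Multiplying by $(d_1+k)!$ to convert to the normalized volume makes the factorials cancel, giving $\vol\F_G(2,0,\dots,0)=\vol\F_{G^1}(2,0,\dots,0)$. The hard part will be exactly this leading-coefficient extraction: I must confirm that the periodic parts of the Ehrhart quasi-polynomial of $\F_{G^1}$ affect only lower-order terms of the convolution, so that the honest constant leading coefficient (the relative volume) is what survives; granting that, both the dimension identity and the volume identity fall out of the same computation.
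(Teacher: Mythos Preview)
Your argument is correct. The paper does not actually supply a proof of this lemma; it merely remarks that it ``follows from considerations in \cite{mm}.'' So there is no in-text proof to compare against, but your Ehrhart-convolution approach is valid: the decomposition
\[
K_G(2t,0,\dots,0)=\sum_{s=0}^{t}\binom{s+k-1}{k-1}\,K_{G^1}\bigl(2(t-s),0,\dots,0\bigr)
\]
holds for the reason you give, and the leading-coefficient extraction goes through because the leading coefficient of an Ehrhart quasi-polynomial of a rational polytope is always the constant relative volume, so the periodic lower-order terms contribute only $O(t^{d_1+k-1})$ to the convolution. Your worry in the last sentence is therefore easy to dispatch.

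It may be worth noting that the ``considerations in \cite{mm}'' most likely point to a more direct geometric statement which your computation is the Ehrhart shadow of. Since the loop root is $2\ee_1$ and the netflow $(2,0,\dots,0)$ scales linearly, the map $(\ell_1,\dots,\ell_k,b')\mapsto \bigl(\sum_i\ell_i,\;b'\bigr)$ exhibits $\F_G(2,0,\dots,0)$ as having, over each $s\in[0,1]$, the fiber $(1-s)\cdot\F_{G^1}(2,0,\dots,0)$ crossed with the simplex $\{\ell\ge 0:\sum_i\ell_i=s\}$. In other words, $\F_G$ is a $k$-fold iterated (lattice) pyramid over $\F_{G^1}$, and it is a standard fact that normalized volume is invariant under taking such pyramids. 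Your discrete Beta-integral computation is exactly the Ehrhart-side verification of this cone volume identity; the geometric version avoids any discussion of quasi-polynomials altogether.
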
 

\subsection{The proof of Theorem \ref{volC}} We prove a sequence of statements which together imply Theorem \ref{volC}.

\begin{theorem} \label{1} 
We have  
\begin{equation*} \vol(CRYC_{n+1})=\sum_{G \in \mathcal{G}} \vol(\F_G(2,0, \ldots, 0)), \end{equation*} where 
$\mathcal{G}=\{G=([n+1], \cup_{v=2}^{n+1} S_{i_v}^{(v)}) \mid i_v \in [v-1],  v\in [2,n+1]\}$ and $S_k^{(v)}=\{(i,v,+),(i,v,+) \mid i \in [1,k-1]\} \cup \{(k,v,+),(k,v,+), (k,v,-)\}\cup \{(i,v,+),(i,v,-) \mid i \in [k+1,v-1] \}$. 
\end{theorem}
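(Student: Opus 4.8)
The plan is to compute $\vol(CRYC_{n+1})$ by repeatedly applying the reduction rules (R1)--(R6) via Lemma~\ref{red}, organizing the recursion vertex by vertex. Since $K_{n+1}^C$ carries loops only at vertices $1,\dots,n$ and the netflow is $(2,0,\dots,0)$, I would first use Lemma~\ref{loops} to discard the loop at vertex $1$ (it contributes nothing to the volume), and then handle the remaining loops $(i,i,+)$ through rule (R6) during the reduction process. The key observation is that the set $\mathcal{G}$ is indexed by a choice $i_v \in [v-1]$ for each vertex $v \in [2,n+1]$, so I expect the recursion to branch exactly once per vertex, and the index $i_v$ records \emph{where} the surviving full-dimensional child occurs at each reduction step.

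The main engine is a reduction procedure processing vertices in increasing order $v = 2, 3, \ldots, n+1$. At each vertex $v$, the incoming negative edges and the various positive edges incident to $v$ are reduced using (R1)--(R6); Lemma~\ref{red} tells me each reduction either splits the volume additively (both children full-dimensional) or passes it to a single full-dimensional child. I would argue that processing vertex $v$ transforms the multiset of edges incident to $v$ into exactly the configuration $S_{i_v}^{(v)}$ for some choice of $i_v \in [v-1]$, where the three cases in the definition of $S_k^{(v)}$ (two positive edges for $i<k$, two positive plus one negative for $i=k$, one positive plus one negative for $i>k$) correspond to the three outcomes of the reduction cascade as the negative edge ``slides'' down to position $k$. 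The sum over all $G \in \mathcal{G}$ then collects precisely the leaves of this reduction tree, each a full-dimensional flow polytope.

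The two technical points I would need to nail down are the \emph{dimension bookkeeping} and the \emph{correctness of the edge multiset} produced at each vertex. For the dimensions, I must verify that $\dim \F_G(2,0,\dots,0) = N - (n+1) + c$ (or the appropriate formula for signed flow polytopes) stays constant along the surviving branches, so that Lemma~\ref{red} applies in its additive form exactly at the branching steps and in its volume-preserving form otherwise; this controls which children survive and guarantees that no volume is lost or double-counted. For the edge multisets, I would track how rules (R2)--(R6) act on the specific local picture at $v$ in $K_{n+1}^C$: each vertex $v$ starts with edges $(i,v,\pm)$ for all $i<v$ plus contributions inherited from reductions at earlier vertices, and I must check that reducing these in order produces $S_{i_v}^{(v)}$ with the claimed structure.

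The hard part will be the dimension-count verification combined with showing the reduction tree's leaves biject with $\mathcal{G}$ without overlap or omission. In particular, I expect the delicate step to be confirming that at each branching the ``other'' child either has strictly smaller dimension (so it is discarded, forcing a single surviving branch) or has equal dimension (so both survive and are summed), in exactly the pattern that yields one free index $i_v \in [v-1]$ per vertex; getting the boundary cases $i_v = 1$ and $i_v = v-1$ right, and confirming that the loops $(i,i,+)$ are consumed correctly by (R6) rather than surviving spuriously, is where the argument is most likely to require care.
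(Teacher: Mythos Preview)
Your strategy---reduce vertex by vertex, invoke Lemma~\ref{red} at each step, and clean up with Lemma~\ref{loops}---is exactly the paper's. Two refinements will make it work as stated. First, Lemma~\ref{loops} requires that the graph have loops \emph{only} at vertex~$1$, so you cannot apply it at the outset to $K_{n+1}^C$; in the paper it is applied at the very end, after all other loops have been eliminated. Second, the paper uses \emph{only} rule~(R6), never (R1)--(R5): at vertex $v$ one pairs the loop $(v,v,+)$ with each incoming negative edge $(i,v,-)$ in turn, longest first. Each application of (R6) yields a child $G_2$ in which the loop has been traded for an extra $(i,v,+)$ (this is the leaf with configuration $S_i^{(v)}$) and a child $G_1$ in which $(i,v,-)$ has become $(i,v,+)$ while the loop persists; one then continues reducing $G_1$. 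So the branching at vertex~$v$ is not a single split but a comb of $v-1$ applications of (R6), producing $v-1$ full-dimensional leaves $S_1^{(v)},\dots,S_{v-1}^{(v)}$ and one final $G_1$ (loop still present, no incoming edges left) of strictly smaller dimension, which is discarded. Your ``slides down'' picture is not quite the mechanism; rather, the loop successively converts negative edges to positive ones until it is itself consumed.

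The dimension check you flag as delicate is handled in the paper by the observation that $\dim \F_H(2,0,\dots,0)$ equals the maximal number of loops at vertex~$1$ appearing in any fully reduced descendant of $H$; a child with a stranded loop at $v>1$ and no incoming edges there can never push that loop down to vertex~$1$, so its maximum is strictly smaller.
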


\proof The proof will proceed via the following steps. We will prescribe an order $\mathcal{O}$ of  repeated reductions (R6)   on $K_{n+1}^C$ and its descendants obtained via these reductions until all graphs obtained have loops only at vertex $1$. We will identify the graphs obtained this way whose flow polytopes  are of the same dimension  as $CRYC_{n+1}=\F_{K_{n+1}^C}(2, 0, \ldots, 0)$. Denote this set of graphs by  $\mathcal{G}(1)$. Applying Lemma \ref{red} we can then write $$\vol(CRYC_{n+1})(2,0, \ldots, 0)=\sum_{G \in \mathcal{G}(1)} \vol(\F_G(2,0, \ldots, 0)).$$ 
 We then observe that if we remove the loops at vertex $1$ from the graphs in $\mathcal{G}(1)$, then we exactly obtain the graphs in $\mathcal{G}$. Thereby, by an application of Lemma \ref{loops} we obtain the statement of Theorem \ref{1}.

 Now we prescribe the  order $\mathcal{O}$ of  repeated reductions (R6)   to $G:=K_{n+1}^C$.
 First we reduce at vertex $2$, until there is nothing to reduce at vertex $2$. Next we reduce at  $3$, until there is nothing to reduce at vertex $3$. We continue like this, until we finally reduce at vertex $n+1$, until there is nothing to reduce at vertex $n+1$. Now, we specify the order of reductions at a given vertex $v$. Since we are only using (R6), we are always using the edge $(v,v,+)$ in the reduction and one incoming edge to $v$.    Order the incoming edges by length and start the reductions  from longest edge towards the shortest until we eliminate the loop at vertex $v$.

We now study the reduction order $\mathcal{O}$.  The order $\mathcal{O}$ calls for  applying reduction  (R6) to edges $(1,2, -)$ and $(2,2, +)$ at vertex $2$. Observe that $\dim \F_{G_1}<\dim \F_{K_{n+1}^C}$ and $\dim \F_{G_2}=\dim \F_{K_{n+1}^C}$. One way to see this is to consider reducing $G_1$ and $G_2$ via the reductions (R1)-(R6) until  no more reductions are possible. At this point all graphs have loops only at vertex $1$ and at vertices with no incoming edges.  It is not hard to see that the maximal possible number of loops at vertex $1$ in a  descendant of $G_1$ is strictly less than  the maximal possible number of loops at vertex $1$ in a descendant of $G_2$. Yet, the maximal possible number of loops at $1$ in a descendant of $G_i$ ($i \in [2]$) which cannot be reduced further via  (R1)-(R6) with loops only at $1$ and  at vertices with no incoming edges equals the dimension of $\F_{G_i}$.  Therefore, we proved the following: 

\medskip
\noindent \textbf{Claim (at vertex $2$).}  If the graph $D$  obtained from $K_{n+1}^C$   by  repeatedly performing (R6) as specified by the order $\mathcal{O}$  is such that $\dim \F_D(2,0,\ldots,0)=\dim \F_{K_{n+1}^C}(2,0,\ldots, 0)$ and in $D$ there is no loop at vertex $2$, then $D$ has three edges of the form $(i,2, \e),$ where $i<2$, and these edges must be $(1,2,+), (1,2,+)$ and $(1,2,-)$. 
  \medskip

Next, take $G_2$ (the only $\dim G$-dimensional descendant of $G$ after performing reductions (R6) in order $\mathcal{O}$ at vertex $2$ until all loops at $2$ are eliminated) and  do repeated reductions (R6) at the vertex $3$ according to $\mathcal{O}$. An analogous argument to above gives that:

\medskip
\noindent \textbf{Claim (at vertex $3$).}  If the graph $D$  obtained from $K_{n+1}^C$   by  repeatedly performing (R6) as specified by the order $\mathcal{O}$   is such that $\dim \F_D(2,0,\ldots,0)=\dim \F_{K_{n+1}^C}(2,0,\ldots, 0)$ and in $D$ there is no loop at vertex $3$, then $D$ has   $5$ edges incident to vertex $3$ of the form $(i,3, \e),$ where $i<3$,  and these edges can be:

\begin{itemize}
 
\item[(1)] $(1,3,+), (1,3,+), (1,3,-)$ and $(2,3,+),(2,3,-)$, or  

\item[(2)] $(1,3,+), (1,3,+)$ and $(2,3,+),(2,3,+), (2,3,-).$

\end{itemize}
\medskip

Generalizing straightforwardly, we obtain:

\medskip

\noindent \textbf{Claim (at vertex $v$).}  If the graph $D$  obtained from $K_{n+1}^C$   by  repeatedly performing (R6) as specified by the order $\mathcal{O}$   is such that $\dim \F_D(2,0,\ldots,0)=\dim \F_{K_{n+1}^C}(2,0,\ldots, 0)$ and in $D$ there is no loop at vertex $v$, then $D$  has $2v-1$ edges incident to vertex $v$ of the form $(i,v, \e),$ where $i<v$,  and the set of these edges 
can be:

\begin{itemize}
 
\item[(1)] $S_1^{(v)}=\{(1,v,+), (1,v,+), (1,v,-)\} \cup \{(i,v,+),(i,v,-) \mid i \in [2,v-1] \}$, or  

\item[(2)] $S_2^{(v)}=\{(1,v,+), (1,v,+)\}\cup \{(2,v,+),(2,v,+), (2,v,-)\}\cup \{(i,v,+),(i,v,-) \mid i \in [3,v-1] \}$, or  

$\cdots$

\item[($k$)] $S_k^{(v)}=\{(i,v,+),(i,v,+) \mid i \in [1,k-1]\} \cup \{(k,v,+),(k,v,+), (k,v,-)\}\cup \{(i,v,+),(i,v,-) \mid i \in [k+1,v-1] \}$

$\cdots$

\item[($v-1$)] $S_{v-1}^{(v)}=\{(i,v,+),(i,v,+) \mid i \in [1,v-2]\} \cup \{(v-1,v,+),(v-1,v,+), (v-1,v,-)\}$.
\end{itemize}

\medskip

Thus, by the claims we obtain a description of  $\mathcal{G}(1)$. It is clear that once the loops at vertex $1$ of the graphs in  $\mathcal{G}(1)$ are deleted we obtain the set of graphs in $\mathcal{G}$. This concludes the proof as explained at the beginning of the proof. 
\qed

\begin{theorem} \label{3} 
We have
\begin{equation} \label{G}  K_{K_{n+1}^C}^{dyn}(0,0,1,2,\ldots,n-1)=\sum_{G \in \mathcal{G}} \vol(\F_G(2,0, \ldots, 0)), \end{equation} where 
$\mathcal{G}=\{G=([n+1], \cup_{v=2}^{n+1} S_{i_v}^{(v)}) \mid i_v \in [v-1],  v\in [2,n+1]\}$ and $S_k^{(v)}=\{(i,v,+),(i,v,+) \mid i \in [1,k-1]\} \cup \{(k,v,+),(k,v,+), (k,v,-)\}\cup \{(i,v,+),(i,v,-) \mid i \in [k+1,v-1] \}$.
\end{theorem}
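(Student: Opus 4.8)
The plan is to apply Theorem \ref{volD} termwise to the right-hand side of \eqref{G}, rewrite everything through the generating series of Proposition \ref{prop:Kdyngs}, and then recognize the sum over $\mathcal{G}$ as a single coefficient extraction from the series of $K_{n+1}^C$. First I would dispose of the right-hand side. Each $G\in\mathcal{G}$ is loopless and connected, since at every vertex $v\ge 2$ the set $S_{i_v}^{(v)}$ joins $v$ to smaller vertices, so Theorem \ref{volD} applies and gives $\vol(\F_G(2,0,\ldots,0))=K_G^{dyn}(0,d_2,\ldots,d_{n+1})$ with $d_v=\indeg_G(v)-1$. Reading off $S_{i_v}^{(v)}$, the incoming (negative) edges at $v$ are exactly $(i_v,v,-)$ together with $(i,v,-)$ for $i\in[i_v+1,v-1]$, whence $\indeg_G(v)=v-i_v$ and $d_v=v-i_v-1$. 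Thus the right-hand side of \eqref{G} equals $\sum_{(i_v)}K_G^{dyn}(0,d_2,\ldots,d_{n+1})$, the sum ranging over all tuples $(i_2,\ldots,i_{n+1})$ with $i_v\in[v-1]$.

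Next I would pass to generating series. Let $[\,\cdot\,]$ denote coefficient extraction, write $T_k^{(v)}$ for the factor of \eqref{Kdyngs} contributed by the edges of $S_k^{(v)}$, and set $U^{(v)}=(1-2x_v)^{-1}\prod_{i<v}(1-x_i-x_v)^{-1}(1-x_ix_v^{-1})^{-1}$ for the vertex-$v$ factor of $K_{n+1}^C$. Then the two sides of \eqref{G} are coefficient extractions from $\prod_v T_{i_v}^{(v)}$ and from $F_C:=(1-2x_1)^{-1}\prod_{v\ge2}U^{(v)}$. Pulling out the common block $B^{(v)}:=\prod_{i<v}(1-x_i-x_v)^{-1}(1-x_ix_v^{-1})^{-1}$ gives $U^{(v)}=B^{(v)}(1-2x_v)^{-1}$ and $T_k^{(v)}=B^{(v)}R_k^{(v)}$, where $R_k^{(v)}=\prod_{i=1}^{k}(1-x_i-x_v)^{-1}\prod_{i=1}^{k-1}(1-x_ix_v^{-1})$. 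The power of $x_v$ demanded on the two sides differs by exactly $i_v-1$, so I would absorb this shift as a monomial $x_v^{i_v-1}$ and carry out the sum over $i_v$ independently at each vertex; with $C_v:=\sum_{k=1}^{v-1}x_v^{k-1}R_k^{(v)}$ and $B:=\prod_{v\ge2}B^{(v)}$ this reduces Theorem \ref{3} to
\begin{equation*}
\Big[x_1^0\prod_{v=2}^{n+1}x_v^{v-2}\Big]\,B\prod_{v=2}^{n+1}C_v=\Big[x_1^0\prod_{v=2}^{n+1}x_v^{v-2}\Big]\,B\prod_{v=1}^{n+1}(1-2x_v)^{-1}.
\end{equation*}

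The hard part will be this last identity, and the obstacle is that it refuses to localize. Checking it factor by factor is not enough: one can verify that $[x_v^{v-2}](B^{(v)}C_v)$ and $[x_v^{v-2}]U^{(v)}$ agree as series in $x_1,\ldots,x_{v-1}$ (for instance both equal $1+4x_1+12x_1^2+\cdots$ when $v=2$), but this controls only the diagonal term, whereas in the product the required power of $x_v$ is shared among all factors indexed by $w\ge v$, since each $B^{(w)}C_w$ and each $U^{(w)}$ contains $x_v$ as a source variable. Taming this convolution is the crux, and I would attack it in one of two ways. The first is a top-down induction: extract $x_{n+1}^{\,n-1}$ (which occurs only in the top factor), use the factorwise identity to replace it by the common residual $W:=[x_{n+1}^{\,n-1}]U^{(n+1)}$, and recurse on $K_n^C$; the difficulty is that $W$ is a genuine series in $x_1,\ldots,x_n$ that re-enters the lower extraction, so one needs a suitably strengthened inductive hypothesis that carries such a boundary factor. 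The second, which I find more robust because it respects all balance conditions by construction, is to build a bijection between dynamic $(0,0,1,\ldots,n-1)$-flows on $K_{n+1}^C$ and the disjoint union over $G\in\mathcal{G}$ of dynamic $(0,d_2,\ldots,d_{n+1})$-flows on $G$, obtained by running the flow reassignment of reduction (R6) (Figure \ref{fig:subdivrules}) at each vertex from its longest incoming edge toward the shortest: the index $i_v$ is recovered as the place where the loop flow at $v$ is exhausted, and the $i_v-1$ edges that (R6) turns from negative to positive absorb precisely the $i_v-1$ drop in netflow. The delicate point—where the coupling above resurfaces—is to show that the $2^{\ell}$ dynamic states of the loop at $v$ in $K_{n+1}^C$ are reproduced exactly by the combined freedom of the $S_k^{(v)}$-flows summed over $k$, consistently at all vertices simultaneously.
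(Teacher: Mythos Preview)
Your reduction of the right-hand side via Theorem~\ref{volD} is correct and matches the paper exactly: each $G\in\mathcal G$ is loopless and connected, $\indeg_G(v)=v-i_v$, and so \eqref{G} becomes \eqref{G1}, namely
\[
K_{K_{n+1}^C}^{dyn}(0,0,1,\ldots,n-1)=\sum_{\aa}K^{dyn}_{G_{\aa}}(0,a_2,\ldots,a_{n+1}),
\]
the sum over $0\le a_v\le v-2$. Your generating-function manipulation down to the coefficient identity involving $B$, $C_v$ and $(1-2x_v)^{-1}$ is also correct (the computation of $R_k^{(v)}$ checks out).

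Where you diverge from the paper is in how to finish. The paper does not attempt your first route (top-down coefficient extraction with a propagated boundary factor); the coupling obstacle you identified is real, and there is no obvious strengthened hypothesis that makes the induction close. The paper instead takes your second route, a bijection, but not the one you sketch. Your proposal is to run the (R6) flow reassignment from Figure~\ref{fig:subdivrules} at each vertex; the trouble is that (R6) is a rule for ordinary $\aa$-flows, not dynamic ones, and it is not clear how to push it through the half-edge mechanism (the spawning of extra right half-edges by left-half flows). The ``delicate point'' you flag---that the $2^\ell$ dynamic states of the loop at $v$ must match the summed freedom of the $S_k^{(v)}$-flows---is precisely the content you are missing, and (R6) alone does not supply it.

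The paper's bijection goes in the opposite direction, $f_\aa\mapsto g(f_\aa)$, and is built by hand. For fixed $v$ with $k=k(v)=v-1-a_v$: common edges $(i,v,+)$ and $(i,v,-)$ (for $i\ge k$) keep their flows; the extra positive edges $(i,v,+)^1$ of $G$ are absorbed by (i) placing $k-1+\sum_{i\le k}f_\aa(i,v,+)^1_l$ on the loop's left half $(v,v,+)_l$, (ii) sending the right-half values $f_\aa(i,v,+)^1_r$ to the right half-edges spawned by the loop, and (iii) sending the left-half values $f_\aa(i,v,+)^1_l$ to the negative edges $(i,v,-)$ for $i<k$ (and adding $f_\aa(k,v,+)^1_l$ to the already-defined $g(f_\aa)(k,v,-)$). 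The inverse hinges on a threshold characterization: $k(v)$ is the unique $t$ with
\[
t-2+\sum_{i<t}g(f_\aa)(i,v,-)<g(f_\aa)(v,v,+)_l\le t-1+\sum_{i\le t}g(f_\aa)(i,v,-),
\]
which is what actually pins down the graph $G_\aa$ from a dynamic flow on $K_{n+1}^C$. This explicit encoding/decoding is the missing idea in your proposal.
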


\proof By Theorem \ref{volD}, $\vol(\F_G(2,0, \ldots, 0))=K_G^{dyn}(0, \indeg_G(2)-1, \ldots, \indeg_G(n+1)-1)$ for each $G \in \mathcal{G}$. Thus, equation \eqref{G} is equivalent to:

   \begin{equation} \label{G1}  K_{K_{n+1}^C}^{dyn}(0,0,1,2,\ldots,n-1)=\sum_{G \in \mathcal{G}} K_G^{dyn}(0, \indeg_G(2)-1, \ldots, \indeg_G(n+1)-1). \end{equation} 

We prove \eqref{G1} by exhibiting a bijection between the dynamic Kostant partition functions counted  on the left hand side and those counted on the right hand side.

First note that 
 $$\{(0, \indeg_G(2)-1, \ldots, \indeg_G(n+1)-1)\mid   G \in \mathcal{G}\}=\{(0,a_2,a_3, \ldots, a_{n+1}) \mid 0\leq a_v\leq v-2, v \in [2, n+1]\},$$ where the equality holds both as sets and multisets. Thus, once we are given a vector  $\aa=(0,a_2,a_3, \ldots, a_{n+1}) $ with $0\leq a_v\leq v-2$, $v \in [2, n+1]$, it uniquely determines the graph $G \in \mathcal{G}$ such that $(0, \indeg_G(2)-1, \ldots, \indeg_G(n+1)-1)=(0,a_2,a_3, \ldots, a_{n+1})$. In the following we denote this unique graph from $\mathcal{G}$ by $G_{(0,a_2,a_3, \ldots, a_{n+1})}=([n+1], \cup_{v=2}^{n+1} S_{v-a_v-1}^{(v)})$.

Given a dynamic integer flow $f_{\aa}$  on $G=G_{(0,a_2,a_3, \ldots, a_{n+1})}$ with netflow vector $(0,a_2,a_3, \ldots, a_{n+1})$ for some  $0\leq a_v\leq v-2$, $v \in [2, n+1]$, we now specify how to  construct  a dynamic integer flow  $g(f_{\aa})$ on $K_{n+1}^C$ with 
netflow vector $(0,0,1,2,\ldots,n-1)$. Our description involves several steps.

\medskip

\noindent \textit{Notational convention.} A positive edge  $(i,j,+)$ is considered a  left and a right half edge. We denote by $(i,j,+)_l$ and $(i,j,+)_r$ the left and right half of edge $(i,j,+)$. In case there are more left or right half edges we add superscripts; e.g.   $(i,j,+)_r$  and $(i,j,+)^1_r$ are two different right half edges. Given a dynamic flow on a graph $G$ we might have added positive right half edges with flows to $G$; we denote the graph with these positive right half edges added by $G^{dyn}$.
\medskip

Let us fix $v\in[2,n+1]$. We define the flows of $g(f_\aa)$ on the positive half edges 
$(i,v,+)_r$ and $(i,v,+)_l$ for $1\le i\le v$ and the negative edges $(i,v,-)$ for $1\le i\le v-1$ as follows. 

Let $k(v)=v-a_v-1$. Then $1\le k(v)\le v-1$ and for $1\le i\le v-1$, the edges between $i$ and $v$ in $G$ are precisely
\begin{itemize}
\item two positive edges $(i,v,+)$ and $(i,v,+)^1$ if $1\le i\le k(v)-1$,
\item two positive edges $(i,v,+)$ and $(i,v,+)^1$ and one negative edge $(i,v,-)$ if $i=k(v)$,
\item one positive edge $(i,v,+)$ and one negative edge $(i,v,-)$ if $k(v)+1\le i\le v-1$.
\end{itemize}

First, observe that $G$ and $K_{n+1}^C$ have common edges: $(i,v,+)$ for $1\le i\le v-1$ and $(i,v,-)$ for $k(v)\le i\le v-1$. We define the flows of $g(f_\aa)$ related to these edges to be the same as those of $f_\aa$. In other words,
\begin{align*}
g(f_\aa)(i,v,+)_l &= f_\aa(i,v,+)_l \qquad \mbox{for $1\le i\le v-1$},  \\
 g(f_\aa)(i,v,+)_r &= f_\aa(i,v,+)_r \qquad \mbox{for $1\le i\le v-1$},  \\
 g(f_\aa)(i,v,-) &= f_\aa(i,v,-) \qquad \mbox{for $k(v)\le i\le v-1$}.
\end{align*}
For the new right half edges at $v$ we just transfer whatever the value of $f_{\aa}$ is on these new right half edges to $g(f_{\aa})$ on the corresponding new right half edges of $(K_{n+1}^{C})^{dyn}$.

Now we need to consider the half edges $(i,v,+)^1_l$ and $(i,v,+)^1_r$ for $1\le i\le k(v)$ in $G$. 

Firstly, we set \[ g(f_\aa)(v,v,+)_l = k(v)-1, \]
 thereby creating new half edges 
$(v,v,+)_r^1, \ldots, (v,v,+)_r^{k(v)-1}$.
Then we define
 \begin{align*}
g(f_\aa)(v,v,+)_r &= f_\aa(1,v,+)_r^1,  \\
 g(f_\aa)(v,v,+)_r^i &= f_\aa(i+1,v,+)_r^1 \qquad \mbox{for $1\le i\le k(v)-1$}.
 \end{align*}

Secondly, we define
\begin{equation}
  \label{eq:g(i,v,-)}
g(f_\aa)(i,v,-) = f_\aa(i,v,+)_l^1 \qquad \mbox{for $1\le i\le k(v)-1$},   
\end{equation}
and increase the value of $g(f_\aa)(k(v),v,-)$, which has been defined above, by $f_\aa(k(v),v,+)_l^1$, so that
\begin{equation}
  \label{eq:g(k(v),v,-)}
g(f_\aa)(k(v),v,-) = f_\aa(k(v),v,-) + f_\aa(k(v),v,+)_l^1.
\end{equation}

Finally, we increase the value of $g(f_\aa)(v,v,+)_l$ so that
\begin{equation}
  \label{eq:g(v,v,+)}
g(f_\aa)(v,v,+)_l = k(v)-1 + \sum_{i=1}^{k(v)} f_\aa(i,v,+)_l^1.  
\end{equation}
This creates $\sum_{i=1}^{k(v)} f_\aa(i,v,+)_l^1$ new right half edges at $v$ in $(K_{n+1}^{C})^{dyn}$. 
We  transfer the values of $f_\aa$ on the same number of new right half edges in $G^{dyn}$ created by 
the values $f_\aa(i,v,+)_l^1$ for $1\le i\le k(v)$ to the new right half edges just created in $(K_{n+1}^{C})^{dyn}$.

Note that the netflow of $g(f_\aa)$ at $v$ is $k(v)-1+a_v=v-2$. Thus $g(f_\aa)$ is a dynamic integer flow on $K_{n+1}^C$ with netflow vector $(0,0,1,2,\ldots,n-1)$.

It is not hard to check that the map $f_\aa\mapsto g(f_\aa)$ is invertible. We now explain how to recover the vector $\aa=(0,a_2,a_3,\dots,a_{n+1})$ from $g(f_\aa)$. For each $v\in [2,n+1]$, we  find $a_v$ as follows. As before,  $k(v)=v-1-a_v$. By \eqref{eq:g(i,v,-)}, \eqref{eq:g(k(v),v,-)}, and \eqref{eq:g(v,v,+)}, we have
\[
g(f_\aa)(v,v,+)_l = k(v)-1 -f_\aa(k(v),v,-1) + \sum_{i=1}^{k(v)} g(f_\aa)(i,v,-).  
\]
Thus
\begin{equation}
  \label{eq:kv1}
g(f_\aa)(v,v,+)_l \le k(v)-1 + \sum_{i=1}^{k(v)} g(f_\aa)(i,v,-).    
\end{equation}
On the other hand, by \eqref{eq:g(i,v,-)} and \eqref{eq:g(v,v,+)}, we have
\begin{equation}
  \label{eq:kv2}
g(f_\aa)(v,v,+)_l \ge k(v)-1 + \sum_{i=1}^{k(v)-1} g(f_\aa)(i,v,-).    
\end{equation}
By \eqref{eq:kv1} and \eqref{eq:kv2}, we obtain that $k(v)$ is the unique integer $t$ satisfying
\[
t-2+ \sum_{i=1}^{t-1} g(f_\aa)(i,v,-) <
g(f_\aa)(v,v,+)_l \le t-1 + \sum_{i=1}^{t} g(f_\aa)(i,v,-).    
\]

Therefore, we can recover $a_v=v-1-k(v)$ from $g(f_\aa)$. Once $\aa$ is obtained, it is easy to recover $f_\aa$ from $g(f_\aa)$. This is a desired bijection and the proof is completed. 

\qed

\begin{example}
We give here a simple example of this construction. If ${\mathbf a}=(0,\ldots ,0)$, the graph $G$ has edges $(i,i+1,-)$, $(i,j,+)$, $(i,j,+)^1$
for $1\le i<j \le n+1$. The unique dynamic flow is such that the edges $(i,j,+)$, $(i,j,+)^1$ become half edges
$(i,j,+)_l$, $(i,j,+)_l^1$ and  $(i,j,+)_r$, $(i,j,+)_r^1$. Every (half) edge $e$ has  flow $f_{\mathbf a}(e)=0$.
The corresponding dynamic flow on  $K_{n+1}^C$ is such that each (half) edge $e=(i,j,+)_l$, $(i,j,+)_r$, or $(i,j,-)$
has flow $g(f_{\mathbf a})(e)=0$ for $1\le i<j \le n+1$. The half edges $(v,v,+)_l$ for $2\le v\le n+1$ are such that
$g(f_{\mathbf a})(v,v,+)_l=v-2$ and the half edges $(v,v,+)_l^i$ have flow $g(f_{\mathbf a})(v,v,+)_r^i=0$ for $1\le i\le v-1$.
\end{example}

We finally write $K_{K_{n+1}^C}^{dyn}(0,0,1,2,\ldots,n-1)$ as a constant term identity~:
\begin{lemma} \label{2}
We have
\begin{multline*}
K_{K_{n+1}^C}^{dyn}(0,0,1,2,\ldots,n-1)\\
=CT_{x_{n-1}}CT_{x_{n-2}} \cdots CT_{x_1} \prod_{i=1}^{n-1}x_i^{-1} (1-x_i)^{-2}(1-2x_i)^{-1}  \prod_{{1\leq i<j \leq n-1}} (x_j-x_i)^{-1}(1-x_j-x_i)^{-1}  .
\end{multline*}
\end{lemma}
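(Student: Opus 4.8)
The plan is to read the identity off the generating series for the dynamic Kostant partition function. Specializing Proposition~\ref{prop:Kdyngs} to $G=K_{n+1}^C$, whose edges are $(i,j,\pm)$ for $1\le i<j\le n+1$ together with the loops $(i,i,+)$, equation~\eqref{Kdyngs} gives
\begin{equation*}
\sum_{\aa}K_{K_{n+1}^C}^{dyn}(\aa)\,\xx^{\aa}
=\prod_{1\le i<j\le n+1}(1-x_ix_j^{-1})^{-1}(1-x_i-x_j)^{-1}\ \prod_{i=1}^{n+1}(1-2x_i)^{-1},
\end{equation*}
the loops supplying the factors $(1-2x_i)^{-1}$. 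Since $K_{K_{n+1}^C}^{dyn}(0,0,1,\dots,n-1)$ is the coefficient of $\xx^{\aa}$ for $\aa=(0,0,1,2,\dots,n-1)$, i.e. $a_1=0$ and $a_v=v-2$ for $v\ge2$, it equals the constant term of $\prod_v x_v^{-a_v}$ times the right-hand side above. This is the type $C$ counterpart of the starting point of the proof of \eqref{eqCRYDMorrislike}; the only new feature is the loop factors, which I will carry along unchanged.

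The first move is to eliminate the variables $x_1,x_2$ sitting at the two zero-netflow vertices. As $x_1$ occurs only with nonnegative exponents in every factor of the series (there are no negative edges into vertex $1$), taking $\CT_{x_1}$ simply sets $x_1=0$, and likewise for $x_2$ afterwards. Under $x_1=x_2=0$ the factors $(1-x_1x_j^{-1})^{-1},(1-x_2x_j^{-1})^{-1}$ and the loop factors $(1-2x_1)^{-1},(1-2x_2)^{-1}$ collapse to $1$, while each positive edge from $1$ or $2$ degenerates, $(1-x_1-x_j)^{-1},(1-x_2-x_j)^{-1}\mapsto(1-x_j)^{-1}$. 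Thus every surviving vertex $j\in\{3,\dots,n+1\}$ gains a factor $(1-x_j)^{-2}$, and we are reduced to the constant term in $x_3,\dots,x_{n+1}$ of
\begin{equation*}
\prod_{v=3}^{n+1}x_v^{-(v-2)}\prod_{3\le i<j\le n+1}(1-x_ix_j^{-1})^{-1}(1-x_i-x_j)^{-1}\prod_{j=3}^{n+1}(1-x_j)^{-2}(1-2x_j)^{-1}.
\end{equation*}

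It then remains to match this to the stated form. Relabelling $y_i=x_{i+2}$ for $1\le i\le n-1$ turns $x_v^{-(v-2)}$ into $y_i^{-i}$, and the elementary rewriting $(1-y_iy_j^{-1})^{-1}=y_j(y_j-y_i)^{-1}$, with $(y_j-y_i)^{-1}$ expanded in nonnegative powers of the smaller-indexed $y_i$, produces a numerator $\prod_{i<j}y_j=\prod_j y_j^{\,j-1}$ that combines with $\prod_i y_i^{-i}$ to give exactly $\prod_i y_i^{-1}$. Renaming $y_i$ back to $x_i$, the expression becomes
\begin{equation*}
\prod_{i=1}^{n-1}x_i^{-1}(1-x_i)^{-2}(1-2x_i)^{-1}\prod_{1\le i<j\le n-1}(x_j-x_i)^{-1}(1-x_j-x_i)^{-1},
\end{equation*}
whose constant term is the claimed right-hand side.

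The step I expect to require the most care is certifying that the iterated operator $\CT_{x_{n-1}}\cdots\CT_{x_1}$, taken in the prescribed order and with each $(x_j-x_i)^{-1}$ ($i<j$) expanded as $\sum_{k\ge0}x_i^kx_j^{-k-1}$, really computes the coefficient extracted from the generating series. The difficulty is that the factors $(1-x_ix_j^{-1})^{-1}$ create unboundedly many negative powers of $x_j$, so the summand is not bounded below in the higher variables and the constant term is a priori order-sensitive. What rescues the argument is that the relevant exponent directions $\ee_i$ and $\ee_i-\ee_j$ ($i<j$) span a pointed cone---the functional $x\mapsto\sum_i(n+2-i)x_i$ is strictly positive on $\ee_i-\ee_j$, $\ee_i+\ee_j$ and $2\ee_i$---so $K_{K_{n+1}^C}^{dyn}(\aa)$ is finite and the series is summable over that cone; extracting the variables in increasing index order keeps every intermediate expression a genuine Laurent series in the variable being removed and agrees with the cone expansion at each stage. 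This is exactly the ordering and summability bookkeeping already performed for type $D$ in \eqref{eqCRYDMorrislike}, and because the loop factors $(1-2x_i)^{-1}$ only contribute nonnegative powers of $x_i$ they leave all of it undisturbed. Granting this, the three displays chain together to yield the asserted identity.
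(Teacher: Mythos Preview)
Your proof is correct and follows essentially the same route as the paper's: specialize the dynamic Kostant generating series to $K_{n+1}^C$, set $x_1=x_2=0$ to dispose of the two zero-netflow vertices, relabel $x_m\mapsto x_{m-2}$, and rewrite $(1-x_ix_j^{-1})^{-1}=x_j(x_j-x_i)^{-1}$ to pass from coefficient extraction to the stated constant term. Your write-up is in fact more explicit than the paper's on the exponent bookkeeping and on why the iterated constant term in increasing index order agrees with the coefficient extraction, points the paper takes for granted.
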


\begin{proof}

By Proposition~\ref{prop:Kdyngs}  we get that 
\begin{multline*}
K_{K_{n+1}^C}^{dyn}(0,0,1,2,\ldots,n-1) \\=  [x_3^1x_4^2\cdots x_{n+1}^{n-1}] \CT_{x_2}\CT_{x_1} {\prod_{1\leq i<j\leq n+1}} (1-x_ix_j^{-1})^{-1}(1-x_i-x_j)^{-1} {\prod_{1\leq i\leq n+1}} (1-2x_i)^{-1}.
\end{multline*}
Then by plugging in $x_1=x_2=0$ and relabeling the variables
$x_m\mapsto x_{m-2}$  gives:
\begin{multline*}
K_{K_{n+1}^C}^{dyn}(0,0,1,2,\ldots,n-1) \\= [x_1^1x_2^2\cdots x_{n-1}^{n-1}] {\prod_{1\leq i<j\leq n-1}} (1-x_ix_j^{-1})^{-1}(1-x_i-x_j)^{-1}\prod_{{1\leq i\leq n-1}} (1-x_i)^{-2} {\prod_{1\leq i\leq n-1}} (1-2x_i)^{-1}.
\end{multline*}
The above equation is equivalent to the desired expression:
\begin{multline*}
K_{K_{n+1}^C}^{dyn}(0,0,1,2,\ldots,n-1) \\= \CT_{x_{n-1}}\CT_{x_{n-2}} \cdots \CT_{x_1} \prod_{i=1}^{n-1}x_i^{-1} (1-x_i)^{-2}  (1-2x_i)^{-1}\prod_{{1\leq i<j \leq n-1}} (x_j-x_i)^{-1}(1-x_j-x_i)^{-1}.
\end{multline*}
\end{proof}
\medskip

\noindent {\it Proof of Theorem \ref{volC}.} Immediate corollary of Theorems \ref{1} and \ref{3} and Lemma \ref{2}.

\section{Proofs of Conjectures \ref{conjcryD} and \ref{conjcryC}}
\label{sec:vol}

In this section we prove Conjecture
\ref{conjcryC} and give a detailed proof of  Zeilberger's theorem, formerly Conjecture \ref{conjcryD}. 
We begin by recalling Zeilberger's approach via the Morris' identity for proving  the volume formula of $CRY_n$ ~\cite{Z}. 

\begin{lemma}[Morris'
identity,  \cite{Z}] For nonnegative integers $a,b$ and a positive half
integer $c$, we have
\begin{equation}
  \label{eq:zeilberger}
\CT_{x_n}\CT_{x_{n-1}}\cdots \CT_{x_1} \prod_{i=1}^n (1-x_i)^{-a} x_i^{-b} 
\prod_{1\le i<j\le  n}(x_j-x_i)^{-2c} 
= \frac{1}{n!} \prod_{j=0}^{n-1} \frac{\Gamma(a+b+(n-1+j)c)\Gamma(c)}
{\Gamma(a+jc)\Gamma(c+jc)\Gamma(b+jc+1)}.
\end{equation}
\end{lemma}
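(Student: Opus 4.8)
The statement is the classical Morris constant term identity in its rational, negative-exponent form, so my first instinct is to deduce it from the Selberg integral rather than prove it from scratch. Writing $F_n(a,b,c)$ for the left-hand side, observe that since $a,b$ are nonnegative integers and $2c$ is a positive integer, each factor expands as a genuine power/Laurent series under the convention that $(x_j-x_i)^{-2c}$ is developed as $x_j^{-2c}(1-x_i/x_j)^{-2c}$ for $i<j$, so $F_n(a,b,c)$ is a well-defined finite sum of products of binomial coefficients. Pulling the Vandermonde apart via $\prod_{i<j}(x_j-x_i)^{-2c}=(\pm1)\prod_i x_i^{-(n-1)c}\prod_{i<j}(1-x_i/x_j)^{-c}(1-x_j/x_i)^{-c}$ rewrites $F_n$ as (a sign times) the negative-power Morris constant term, the object directly tied to Selberg's integral
\[
S_n(\alpha,\beta,\gamma)=\prod_{j=0}^{n-1}\frac{\Gamma(\alpha+j\gamma)\Gamma(\beta+j\gamma)\Gamma(1+(j+1)\gamma)}{\Gamma(\alpha+\beta+(n+j-1)\gamma)\Gamma(1+\gamma)}.
\]

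The cleanest reformulation is that the claimed right-hand side equals $\bigl(S_n(a,b,c)\prod_{j=0}^{n-1}(b+jc)\bigr)^{-1}$; one verifies this by an elementary simplification of Gamma ratios, applying $\Gamma(z+1)=z\Gamma(z)$ to the factors $\Gamma(1+c)$, $\Gamma(1+(j+1)c)$ and $\Gamma(b+jc+1)$ (the leftover products collapse to $n!/\prod_{j=0}^{n-1}(b+jc)$). Thus the theorem is equivalent to the \emph{dual} Selberg evaluation $F_n(a,b,c)=\bigl(S_n(a,b,c)\prod_{j=0}^{n-1}(b+jc)\bigr)^{-1}$. I would establish this by Aomoto's method: first record the base case $n=1$, where $F_1(a,b,c)=\CT_{x_1}(1-x_1)^{-a}x_1^{-b}=\binom{a+b-1}{b}=\Gamma(a+b)/(\Gamma(a)\Gamma(b+1))$, matching the $n=1$ instance of the right-hand side; then set up a first-order recursion in the parameter $b$ (equivalently, an extremal-variable/partial-fraction manipulation of the constant term that alters only boundedly many summands), and check that the right-hand side satisfies the very same recursion by a direct Gamma-ratio computation. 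The nested induction in $n$ and $b$ then closes the argument.

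The main obstacle is getting the analytic bridge right. Selberg's integral converges and is classically evaluated only for $\mathrm{Re}(\gamma)>0$ (here $\gamma=c>0$), whereas the dual relation I need lives in the negative-exponent regime; since that relation is a statement about the Gamma-product and not the integral itself, I must either (i) treat Selberg's formula as an identity of meromorphic functions and push the parameters through the reflection formula $\Gamma(z)\Gamma(1-z)=\pi/\sin\pi z$, or (ii) bypass integrals entirely and run Aomoto's recursion purely formally on the constant term. Route (ii) is the safer one to write up, since every quantity in sight is then a finite, manifestly well-defined sum and the only real work is the careful sign and index bookkeeping in the recursion — routine but error-prone. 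A final sanity check at $n=2$, where $F_2(a,b,c)=\sum_{m=0}^{b}\binom{2c+m-1}{m}\binom{a+b-m-1}{b-m}\binom{a+b+2c+m-1}{b+2c+m}$ must collapse (by a Pfaff--Saalsch\"utz-type summation) to the two-factor Gamma product, confirms both the normalization and the recursion before committing to the general induction.
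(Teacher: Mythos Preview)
The paper does not prove this lemma at all: it is quoted as a known result, attributed to Morris's thesis and to Zeilberger, and used as a black box in the proof of Theorem~4.2. So there is no ``paper's own proof'' to compare your attempt against.

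As for your sketch itself: the overall strategy --- identify the Morris constant term with the reciprocal of the Selberg product and then verify that identification by Aomoto-style recursion --- is the standard and correct route, and your base case $n=1$ and your Gamma-function bookkeeping (the claim that the right-hand side equals $\bigl(S_n(a,b,c)\prod_{j=0}^{n-1}(b+jc)\bigr)^{-1}$) both check out. What you have written, however, is a plan rather than a proof. The load-bearing step is the ``first-order recursion in the parameter $b$'' that you allude to but never write down; Aomoto's argument involves inserting an auxiliary factor $\prod_{i\le k}x_i$ into the integrand, differentiating, and extracting a linear relation among the resulting quantities, and the details (especially the symmetrization and the handling of the Vandermonde) are where all the content lives. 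Simply asserting that such a recursion exists and that ``the right-hand side satisfies the very same recursion by a direct Gamma-ratio computation'' is not a proof. Likewise, your route~(i) via analytic continuation and the reflection formula would require you to actually justify that the constant term, a priori defined only for integer $2c$, extends to a meromorphic function agreeing with the Selberg side --- a nontrivial step you flag but do not address.

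If you want this to stand as an independent proof rather than a citation, you need to either (a) write out the Aomoto recursion explicitly on the constant-term side and verify it, or (b) cite a source (Morris, Forrester--Warnaar, or Zeilberger) that does. Given that the paper itself is content to cite the result, option~(b) is entirely adequate here.
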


In  \cite{Z} Zeilberger proved the volume formula for  $CRY_n$ by
showing that, when we set $a=2,b=0,c=1/2$ in \eqref{eq:zeilberger}, we have
\begin{equation}
  \label{eq:cat}
\CT_{x_n}\CT_{x_{n-1}}\cdots \CT_{x_1} \prod_{i=1}^n (1-x_i)^{-2}
\prod_{1\le i<j\le  n}(x_j-x_i)^{-1} =
\frac{1}{n!} \prod_{j=0}^{n-1}
\frac{\Gamma(\frac{n+3+j}2)\Gamma(\frac 12)}
{\Gamma(\frac{4+j}2)\Gamma(\frac{1+j}2)\Gamma(\frac{2+j}2)}=  
\prod_{k=1}^n \Cat(k).
\end{equation}

Using \eqref{eq:zeilberger}, we will prove the following theorem.

\begin{theorem}\label{thm:C} 
For a nonnegative integer $a$ and a positive half integer
  $c$, we have
\begin{multline*}
\CT_{x_n}\CT_{x_{n-1}}\cdots \CT_{x_1}  \prod_{j=1}^n x_j^{-a+1}(1-x_j)^{-a}
(1-2x_j)^{-b} \prod_{1\le j<k\le  n} (x_j-x_k)^{-2c} (1-x_j-x_k)^{-2c}  
\\= 2^{2an+4c\binom n2-2n} 
\frac{1}{n!} \prod_{j=0}^{n-1} \frac{\Gamma(a+\frac{b-1}2+(n-1+j)c)\Gamma(c)}
{\Gamma(\frac{b+1}2+jc)\Gamma(c+jc)\Gamma(a+jc)}.
\end{multline*}
\end{theorem}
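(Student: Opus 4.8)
The plan is to reduce Theorem~\ref{thm:C} to the type~$A$ Morris identity~\eqref{eq:zeilberger} by the substitution $y_i = 1-2x_i$, i.e. $x_i = \tfrac12(1-y_i)$, which is exactly the map that converts the ``type~$C$'' data of the integrand into ``type~$A$'' data. Under it one has $1-x_i = \tfrac12(1+y_i)$, $1-2x_i = y_i$, and, crucially, for the paired factors
\[
x_j-x_k = \tfrac12(y_k-y_j), \qquad 1-x_j-x_k = \tfrac12(y_j+y_k),
\]
so that $(x_j-x_k)(1-x_j-x_k) = \tfrac14(y_k^2-y_j^2)$ and the whole product collapses onto the even combinations $y_k^2-y_j^2$. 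First I would carry out this substitution purely algebraically on the integrand, pulling out the resulting power of $2$: the single factors $x_j^{-a+1}(1-x_j)^{-a}(1-2x_j)^{-b}$ contribute $2^{(2a-1)n}\prod_j(1-y_j)^{-a+1}(1+y_j)^{-a}y_j^{-b}$, and the paired factors contribute $2^{4c\binom n2}\prod_{j<k}(y_k^2-y_j^2)^{-2c}$.

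Next I would record that, apart from the single factor $\prod_j(1-y_j)$ coming from the asymmetry $(1-y_j)^{-a+1}(1+y_j)^{-a} = (1-y_j)(1-y_j^2)^{-a}$, the transformed integrand is a function of the squares $y_j^2$ alone, namely $\prod_j y_j^{-b}(1-y_j^2)^{-a}\prod_{j<k}(y_k^2-y_j^2)^{-2c}$. Introducing $u_j=y_j^2$ turns $\prod_{j<k}(y_k^2-y_j^2)^{-2c}$ into the genuine type~$A$ product $\prod_{j<k}(u_k-u_j)^{-2c}$ and $(1-y_j^2)^{-a}$ into $(1-u_j)^{-a}$, so the symmetric part is precisely the Morris integrand in the variables $u_j$. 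At this point Morris' identity~\eqref{eq:zeilberger} applies with the parameter triple $(a,\tfrac{b-1}2,c)$, and matching the resulting Gamma product against the claimed right-hand side is a direct check: one reads off $A=a$, $B=\tfrac{b-1}2$, so that $\Gamma(B+jc+1)=\Gamma(\tfrac{b+1}2+jc)$ and $\Gamma(A+B+(n-1+j)c)=\Gamma(a+\tfrac{b-1}2+(n-1+j)c)$, reproducing exactly the three Gamma factors on the right.

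The delicate, and genuinely hard, step is the bookkeeping of the constant-term operator through these two substitutions, since $x_i=\tfrac12(1-y_i)$ sends $x_i=0$ to $y_i=1$ rather than to $y_i=0$; hence $\CT_{x_i}$ is a priori a residue at $y_i=1$ and is \emph{not} literally $\CT_{y_i}$. It is exactly this mismatch that must account for the two features of the final formula that the naive algebraic substitution does not see: the shift of the Morris second parameter from $\tfrac b2$ to $\tfrac{b-1}2$ (equivalently, the survival of the odd factor $\prod_j(1-y_j)$ under the correct coefficient extraction, rather than its spurious vanishing by parity), together with an extra factor $2^{-n}$ relative to the algebraic power $2^{(2a-1)n+4c\binom n2}$, producing the stated $2^{2an+4c\binom n2-2n}$. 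I would control this by fixing a single iterated-expansion convention for the Laurent series throughout and tracking how the coefficient of $x_i^0$ corresponds to a definite coefficient in the $y_i$- (resp. $u_i$-) expansion, and I would sanity-check the scheme at $n=1$, where $\CT_{x_1}x_1^{-a+1}(1-x_1)^{-a}(1-2x_1)^{-b}$ is computable by hand and equals $2^{2a-2}\Gamma(a+\tfrac{b-1}2)/\bigl(\Gamma(\tfrac{b+1}2)\Gamma(a)\bigr)$, matching the claim. Once this constant-term transfer is pinned down, collecting the powers of $2$ and invoking~\eqref{eq:zeilberger} completes the proof.
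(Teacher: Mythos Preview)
Your plan is essentially the paper's proof: the same substitution $y_j=1-2x_j$, then the passage to squares, then an appeal to the Morris identity with parameters giving exactly the Gamma factors you wrote down. The paper resolves precisely the ``delicate, and genuinely hard, step'' you flag by rewriting each $\CT_{x_j}$ as a contour integral via Cauchy's formula before substituting; the three changes of variable $x_j\mapsto z_j=1-2x_j$, $z_j\mapsto y_j=z_j^2$, and $y_j\mapsto t_j=1-y_j$ are then honest changes of variable in iterated contour integrals around $0$, around $1$, and back around $0$, with the Jacobians supplying automatically both the extra $2^{-n}$ and the $y_j^{-1/2}$ that shifts the Morris ``$b$''-parameter to $\tfrac{b-1}{2}$. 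In particular, the odd factor $\prod_j(1-y_j)$ you worry about never appears: once the $1/x_j$ from the Cauchy kernel is absorbed, the single-variable piece is already even in $z_j$, and the squaring step is clean. You should add the third substitution $t_j=1-y_j$ (you stop at $u_j=y_j^2$, which is centered at $1$, not $0$), but otherwise your outline and the paper's argument coincide.
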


Before proving Theorem~\ref{thm:C} we show how this theorem implies
Conjectures~\ref{conjcryD} and \ref{conjcryC}. 

\begin{proof}[Proof of Conjecture~\ref{conjcryD}]
If $a=2,b=0,c=1/2$ in Theorem~\ref{thm:C}, we have
\begin{multline}\label{eq:1}
\CT_{x_n}\CT_{x_{n-1}}\cdots \CT_{x_1}  \prod_{j=1}^n x_j^{-1}(1-x_j)^{-2}
\prod_{1\le j<k\le  n} (x_j-x_k)^{-1} (1-x_j-x_k)^{-1}  
\\= \frac{2^{n^2+n}}{n!}\prod_{j=0}^{n-1}
\frac{\Gamma(\frac{n+2+j}2)\Gamma(\frac 12)}
{\Gamma(\frac{1+j}2)\Gamma(\frac{1+j}2)\Gamma(\frac{4+j}2)}.
\end{multline}
Since
\[
\prod_{j=0}^{n-1}
\frac{\Gamma(\frac{n+3+j}2)\Gamma(\frac 12)}
{\Gamma(\frac{4+j}2)\Gamma(\frac{1+j}2)\Gamma(\frac{2+j}2)}
\left/
\prod_{j=0}^{n-1}
\frac{\Gamma(\frac{n+2+j}2)\Gamma(\frac 12)}
{\Gamma(\frac{1+j}2)\Gamma(\frac{1+j}2)\Gamma(\frac{4+j}2)}=  
\right.
\frac{\Gamma(\frac{2n+2}2)\Gamma(\frac{1}2)}
{\Gamma(\frac{n+2}2)\Gamma(\frac{n+1}2)}
=2^n, 
\]
the right hand side of \eqref{eq:1} is equal to
\[
\frac{2^{n^2}}{n!} \prod_{j=0}^{n-1}
\frac{\Gamma(\frac{n+3+j}2)\Gamma(\frac 12)}
{\Gamma(\frac{4+j}2)\Gamma(\frac{1+j}2)\Gamma(\frac{2+j}2)} =
2^{n^2}\prod_{k=1}^n \Cat(k),
\]
where \eqref{eq:cat} is used for the last equation.  Thus
Conjecture~\ref{conjcryD} follows from \eqref{eqCRYDMorrislike}
\end{proof}

\begin{proof}[Proof of Conjecture~\ref{conjcryC}]
If $a=2,b=1,c=1/2$ in Theorem~\ref{thm:C}, we have
\begin{multline*}
\CT_{x_n}\CT_{x_{n-1}}\cdots \CT_{x_1}  \prod_{j=1}^n x_j^{-1}(1-x_j)^{-2}
(1-2x_j)^{-1} \prod_{1\le j<k\le  n} (x_j-x_k)^{-1} (1-x_j-x_k)^{-1}  
\\= \frac{2^{n^2+n}}{n!}\prod_{j=0}^{n-1}
\frac{\Gamma(\frac{n+3+j}2)\Gamma(\frac 12)}
{\Gamma(\frac{2+j}2)\Gamma(\frac{1+j}2)\Gamma(\frac{4+j}2)}
=2^{n^2+n}\prod_{k=1}^n \Cat(k),
\end{multline*}
where \eqref{eq:cat} is used for the last equation.  Thus
Conjecture~\ref{conjcryC} follows from Theorem~\ref{volC}
\end{proof}

For the rest of this section we prove Theorem~\ref{thm:C}. The idea is
to change constant terms into contour integrals and consider several
changes of variables.

For a function $f(z)$ with a Laurent series expansion at $z$, we
denote by $\CT_z f(z)$ the constant term of the Laurent expansion of
$f(z)$ at $0$. In other words, if
$f(z)=\sum_{n=-\infty}^\infty a_n z^n$, then $\CT_z f(z) = a_0$.  By
Cauchy's integral formula, if $f(z)$ has a Laurent series expansion at
$0$, we have
\begin{equation}
  \label{eq:CT}
\CT_z f(z) = \frac{1}{2\pi i} \oint_C \frac{f(z)} z dz,
\end{equation}
where $C$ is the circle $\{z: |z|=\epsilon\}$ oriented
counterclockwise for a real number $\epsilon >0$ such that $f(z)$ is
holomorphic inside $C$ except $0$.  Thus \eqref{eq:zeilberger} can be rewritten
as
\begin{multline}
  \label{eq:zeilberger2}
\frac{1}{(2\pi i)^n}\oint_{C_n}\dots\oint_{C_1}  \prod_{j=1}^n (1-x_j)^{-a} x_j^{-b-1} 
\prod_{1\le j<k\le  n}(x_k-x_j)^{-2c}  dx_1\cdots dx_n\\
= \frac{1}{n!} \prod_{j=0}^{n-1} \frac{\Gamma(a+b+(n-1+j)c)\Gamma(c)}
{\Gamma(a+jc)\Gamma(c+jc)\Gamma(b+jc+1)},
\end{multline}
where $C_j$ is the circle $\{z: |z|=j\epsilon\}$ oriented
counterclockwise for a real number $0<\epsilon<\frac 1n$. In
\eqref{eq:zeilberger2} $a$ can be any positive real number. 

\begin{proof}[Proof of Theorem~\ref{thm:C}]
  Let $L$ denote the left hand side of the identity in the theorem, i.e.,
\[
L = \CT_{x_n}\CT_{x_{n-1}}\cdots \CT_{x_1}  \prod_{j=1}^n
  x_j^{-a+1}(1-x_j)^{-a} (1-2x_j)^{-b}
\prod_{1\le j<k\le  n} (x_k-x_j)^{-2c} (1-x_k-x_j)^{-2c}.
\]
By \eqref{eq:CT}, $L$ is equal to
\[
\frac{1}{(2\pi i)^n}\oint_{C_n}\cdots\oint_{C_1}
\prod_{j=1}^n x_j^{-a}(1-x_j)^{-a} (1-2x_j)^{-b}
\prod_{1\le j<k\le  n} (x_k-x_j)^{-2c} (1-x_k-x_j)^{-2c}
dx_1\cdots dx_n,
\]
where $C_j$ is the circle $\{z: |z|=j\epsilon\}$ oriented
counterclockwise for a real number $0<\epsilon<\frac{1}{2n}$.  We will
express $L$ as a constant multiple of the contour integral in
\eqref{eq:zeilberger2} by using changes of variable 3 times.

Using the change of variables $x_j = \frac{1-z_j}2$ or $z_j=1-2x_j$ to
the above integral, we have 
\begin{multline*}
L = \frac{1}{(2\pi i)^n}\oint_{C'_n}\cdots\oint_{C'_1}
\prod_{j=1}^n \left(\frac{1-z_j}2\right)^{-a} 
\left(\frac{1+z_j}2\right)^{-a}  z_j^{-b}\\
\times \prod_{1\le j<k\le  n} \left(\frac{-z_k+z_j}2\right)^{-2c}
\left(\frac{z_k+z_j}2\right)^{-2c} (-2)^{-n} dz_1\cdots dz_n\\
=\frac{(-1)^n2^{2an+4c\binom n2-n}}{(2\pi i)^n}\oint_{C'_n}\cdots\oint_{C'_1}
\prod_{j=1}^n (1-z_j^2)^{-a} z_j^{-b}
\prod_{1\le j<k\le  n} (z_j^2-z_k^2)^{-2c}
dz_1\cdots dz_n,
\end{multline*}
where $C'_j$ is the circle $\{z: |z-1|=2j\epsilon\}$ oriented
counterclockwise.

Using the change of variables $z_j^2 = y_j$ or $z_j=y_j^{1/2}$, we have
\[
L=\frac{(-1)^n2^{2an+4c\binom n2-2n}}{(2\pi i)^n}\oint_{C''_n}\cdots\oint_{C''_1}
\prod_{j=1}^n (1-y_j)^{-a} y_j^{-b/2} y_j^{-1/2}
\prod_{1\le j<k\le  n} (y_j-y_k)^{-2c}
dy_1\cdots dy_n,
\]
where $C''_j$ is the circle $\{z: |z-1|=4j\epsilon\}$ oriented
counterclockwise. This is because if $C_j'$ is parametrized by
$1+2j\epsilon e^{i\theta}$ for $0\le \theta\le 2\pi$, then the image
of $C_j'$ under the map $z\mapsto z^2$ can be parametrized by
$1+4j\epsilon e^{i\theta} + 4j^2\epsilon^2 e^{2i\theta}$ for $0\le
\theta\le 2\pi$. Since we can make $\epsilon$ arbitrarily close to $0$, we can deform this
image to the circle $C_j''$ without changing the value of the contour integral.

Using the change of variables $t_j = 1-y_j$, we have
\begin{multline*}
L = \frac{2^{2an+4c\binom n2-2n}}{(2\pi i)^n} \oint_{C'''_n}\cdots\oint_{C'''_1}
\prod_{j=1}^n t_j^{-a} (1-t_j)^{-(b+1)/2}
\prod_{1\le j<k\le  n} (t_k-t_j)^{-2c}
dt_1\cdots dt_n \\
=2^{2an+4c\binom n2-2n} \CT_{x_n} \cdots \CT_{x_1}
\prod_{j=1}^n x_j^{-a+1} (1-x_j)^{-(b+1)/2}
\prod_{1\le j<k\le  n} (x_k-x_j)^{-2c},
\end{multline*}
where $C'''_j$ is the circle $\{z: |z|=4j\epsilon\}$ oriented
counterclockwise.  Using \eqref{eq:zeilberger2} we finish the proof.
\end{proof}

\section{Conclusion}
\label{sec:conc}

The link between the Kostant partition function of graphs
and the volume of their flow polytopes has been established a decade ago
\cite{BV}. A generalization of this  correspondence via dynamic Kostant partition functions was demonstrated for loopless signed graphs in \cite{mm}.
In this paper we showed among others that dynamic Kostant partition functions can be used for  certain signed graph with loops to obtain the  volume of their associated flow polytope with netflow vector $(2,0, \ldots, 0)$ analogously to the loopless case. This is not true for all signed graphs
with loops. We leave as an open problem the classification of signed graphs
with loops where the volume of the associated flow polytope with netflow vector $(2,0, \ldots, 0)$
is equal to the corresponding dynamic  Kostant partition function evaluation. More broadly,   is there an appealing  further generalization of the Kostant partition function that would work for calculating the volume of the flow polytope of any signed graph (and netflow vector)?  Finally, it would be very interesting to gain a unified insight into which flow polytopes have nice
product formulas for their volume and why. See this paper and
\cite{CKM,M, mm, tesler} for examples of such nice formulas.

\section*{Acknowledgements} This work started during a stay of the second and third authors at the Universit\'e Paris 7  Diderot. The third author 
is grateful for the invitation from, support of and hospitality of the Universit\'e Paris 7. 
Corteel is partially supported by the project Emergences ``Combinatoire \`a Paris''. Kim is partially supported by the National Research Foundation of Korea 
(NRF) grants (NRF-2016R1D1A1A09917506) and (NRF-2016R1A5A1008055). M\'esz\'aros is partially supported by a National Science Foundation Grant  (DMS 1501059).

\end{document}